\newcommand{\lversor}{\,\reflectbox{\ensuremath{\oslash}}\,}
\numberwithin{figure}{section}
\numberwithin{equation}{section}
\newtheorem{mainthm}{Theorem}
\newtheorem*{thm*}{Theorem}
\newtheorem{lem}[equation]{Lemma}
\newtheorem{lemma}[equation]{Lemma}
\newtheorem{fact}[equation]{Fact}
\theoremstyle{definition}
\theoremstyle{remark}
\newtheorem{remark}[equation]{Remark}
\newtheorem{ex}[equation]{Example}
\DeclareMathOperator{\End}{End}
\DeclareMathOperator{\Aut}{Aut}
\DeclareMathOperator{\Der}{Der}
\DeclareMathOperator{\gl}{\mathfrak{gl}}
\DeclareMathOperator{\GL}{GL}
\DeclareMathOperator{\SL}{SL}
\DeclareMathOperator{\Cen}{Cen}
\DeclareMathOperator{\Nuc}{Nuc}
\DeclareMathOperator{\Inn}{Inn}
\DeclareMathOperator{\im}{im}
\newcommand{\C}{\mathbb{C}}
\newcommand{\M}{\mathbb{M}}
\newcommand{\bmto}{\rightarrowtail}
\newcommand{\K}{\mathbb{K}}
\newcommand{\ra}{\rangle}
\renewcommand{\phi}{\varphi}
\newcommand{\comp}[1]{\bar{#1}}
\newcommand{\bra}[1]{\langle#1|}
\newcommand{\ket}[1]{|#1\rangle}
\renewcommand{\hom}{{\rm Hom}}
\DeclareFontFamily{U}{rcjhbltx}{}
\DeclareFontShape{U}{rcjhbltx}{m}{n}{<->rcjhbltx}{}
\DeclareSymbolFont{hebrewletters}{U}{rcjhbltx}{m}{n}
\let\aleph\relax\let\beth\relax
\let\gimel\relax\let\daleth\relax
\DeclareMathSymbol{\aleph}{\mathord}{hebrewletters}{39}
\DeclareMathSymbol{\beth}{\mathord}{hebrewletters}{98}
\DeclareMathSymbol{\gimel}{\mathord}{hebrewletters}{103}
\DeclareMathSymbol{\daleth}{\mathord}{hebrewletters}{100}
\DeclareMathSymbol{\lamed}{\mathord}{hebrewletters}{108}
\DeclareMathSymbol{\mem}{\mathord}{hebrewletters}{109}
\DeclareMathSymbol{\ayin}{\mathord}{hebrewletters}{96}
\DeclareMathSymbol{\tsadi}{\mathord}{hebrewletters}{118}
\DeclareMathSymbol{\qof}{\mathord}{hebrewletters}{114}
\DeclareMathSymbol{\shin}{\mathord}{hebrewletters}{152}
\DeclareMathSymbol{\waw}{\mathord}{hebrewletters}{119}
\DeclareMathSymbol{\vavv}{\mathord}{hebrewletters}{119}
\DeclareMathOperator{\vav}{{{\ensuremath{\vavv}\,}}}
\renewcommand{\leq}{\leqslant}
\renewcommand{\geq}{\geqslant}
\renewcommand{\sl}{\mathfrak{sl}}
\newcommand{\Op}[2]{\mathfrak{Z}(#1,#2)}
\begin{document}

\title[Inner automorphisms of tensors]{Exact Sequences of Inner Automorphisms of tensors}
\author{Peter A. Brooksbank}
\address{
	Department of Mathematics\\
	Bucknell University\\
	Lewisburg, PA 17837
}
\email{pbrooksb@bucknell.edu}

\author{Joshua Maglione}
\address{
	Fakult\"at f\"ur Mathematik\\
	Universit\"at Bielefeld\\
	D-33501 Bielefeld, Germany
}
\email{jmaglione@math.uni-bielefeld.de}

\author{James B. Wilson}
\address{
	Department of Mathematics\\
	Colorado State University\\
	Fort Collins, CO 80523
}
\email{James.Wilson@ColoState.Edu}

\date{\today}
\keywords{tensor, derivation, autotopism}

\thanks{This work was supported in part by NSF grants DMS-1620454 and DMS-1620362,
by the Simons Foundation $\#$281435, and by the Hausdorff Research Institute for Mathematics.}

\begin{abstract}
We produce a long exact sequence whose terms are unit groups of associative algebras
that behave as inner automorphisms of a given tensor.  
Our sequence
generalizes
known sequences for associative and non-associative algebras. In a manner similar to 
those, our sequence facilitates inductive reasoning about, and calculation of the groups 
of symmetries of a tensor.
The new insights these methods afford can be applied 
to problems ranging from
understanding algebraic structures to distinguishing entangled states in particle physics.
\medskip

\begin{center}
{\em In memory of C.C. Sims.}
\end{center}
\end{abstract}

\maketitle

\section{Introduction}
\label{sec:intro}
The purpose of this work is to provide tools to expose the symmetries of a tensor.
By a \emph{tensor} we mean a vector, $t$, that
can be interpreted as a multilinear map $\bra{t}\colon U_{\vav}\times \cdots\times U_1\bmto U_0$.
For instance, a $(d_2 \times d_1\times d_0)$-grid, $t=[t_{ij}^{k}]$, of scalars may
be interpreted  as a multilinear map $\bra{t}:\K^{d_2}\times \K^{d_1}\bmto \K^{d_0}$
evaluated on  $\ket{u_2,u_1}\in \K^{d_2}\times \K^{d_1}$ as follows:
\begin{align*}
	\bra{t} u_2,u_1\ra & = \left( 
		\sum_{i=1}^{d_2}\sum_{j=1}^{d_1} u_{2i}t_{ij}^{1} u_{1j},
		\ldots,
		\sum_{i=1}^{d_2}\sum_{j=1}^{d_1} u_{2i}t_{ij}^{d_0} u_{1j}\right)\in \K^{d_0}.
\end{align*}
Throughout, $\bmto$ will denote a multilinear map, while $\to$ will be reserved
for linear maps.
Tensors describe diverse structures, 
including distributive products in algebra,
affine connections in differential geometry, 
quantum entanglement in particle physics, 
and measurements and meta-data in statistical models.

A natural objective in the study of a tensor 
$\bra{t}\colon U_{\vav}\times \cdots\times U_1\bmto U_0$
is to discover properties invariant under change of basis.
We are particularly interested in one such invariant, 
namely its group
\begin{align}\label{eq:RZ}
	\Aut(t) & = \left\{\alpha\in \prod_{a=0}^{\vav} \Aut_{\K}(U_a)~~ \middle |~~
   \forall u\in\prod_aU_a,\;   
   \alpha_0\langle t|u_{\vav},\ldots,u_1\rangle=
		\langle t|\alpha_{\vav}u_{\vav},\ldots,\alpha_1u_1\rangle\right\}
\end{align}
of symmetries. This group is difficult to compute so
it is helpful to break it up into an exact sequence of groups which are, in general, 
easier to construct.  


There are precedents in algebra for such an approach.
The main idea, for a given associative algebra $A$,
is to study $\Aut(A)$ by placing it in an exact sequence
\begin{align}
\label{eq:RZ}
	1 \to \Inn (A) \to \Aut(A)\to \mathcal{J} (A),
\end{align}
where $\Inn(A)$ is the group of inner automorphisms, and $\mathcal{J}(A)$ has
a natural representation that can be explored without knowing $\Aut(A)$.  For instance, in Skolem-Noether 
type theorems, $\mathcal{J}(A)$ is the group of Galois automorphisms of the center of $A$. General
{\em Rosenberg--Zelinsky sequences} relate $\mathcal{J}(A)$ to the module theory of $A$
via ideal class groups, Picard groups, and so forth~\citelist{\cite{AH}\cite{GM}\cite{BFRS}}.   

Variations for non-associative Lie and Jordan algebras were carried out by Jacobson and others \cite{Benkart-Osborn}.  
Here, the notion of ``inner'' automorphisms is not obvious. If $L$ is a Lie algebra, for example, the appropriate substitutes are 
from the units in the associative algebra generated by left actions $L_x\in \mathrm{End}(L)$, where $L_x(y)=[x,y]$.  
Factoring out the inner automorphisms of central simple algebras leaves a group naturally represented as a Galois group---and in more general cases, a group akin to the $\mathcal{J}(A)$ used in general Rosenberg-Zelinsky sequences.

For tensors, the concept of inner automorphisms is even less clear.  
Even for a bilinear map $\langle t| \colon U_2\times U_1\rightarrowtail U_0$, 
the left actions $L_x\in \hom(U_1,U_0)$ compose only when $U_0=U_1$. 
For tensors of higher valence, 
there are $\binom{\vav+1}{2}$ suitable analogues of ``left'' or ``right'' actions. 
A solution however is visible in an earlier extension 
of Skolem-Noether type theorems for bilinear maps by the third author~\cite{Wilson:Skolem-Noether}; 
cf.\ Section~\ref{sec:mat}. The result is that our sequences do not begin $0\to \Inn(t)\to \Aut(t)$ 
but rather extend to the left of $\Aut(t)$ with a long sequence of corrections. 
Once in place, one can essentially follow the treatment
in~\cite{Wilson:Skolem-Noether} for non-associative algebras
to extend the sequences to the right.

\subsection{Notation \& terminology}
Throughout, the Hebrew letter $\vav$ (\emph{vav} to evoke {\em valence}) is a nonnegative integer.
Set $[\vav]=\{0,\ldots,\vav\}$ and $\binom{[\vav]}{i}=\{A\subset [\vav]\mid
|A|=i\}$. 
For $A\subset[\vav]$, write $\comp{A}=[\vav]-A$
and $\comp{a}=[\vav]-\{a\}$. Let 
$\K$ be a commutative unital ring and let $U_{\vav},\ldots,U_0$
be finitely generated $\K$-modules. Define
\begin{align*}
	U_0\oslash U_1 & = \hom(U_1,U_0) & 
	U_0\oslash \cdots \oslash U_{\vav}
		& = (U_0\oslash \cdots \oslash U_{\vav-1})\oslash U_{\vav}.
\end{align*}
Then $(-)\oslash U_a$ is a functor on 
modules with (left) adjoint functor $(-)\otimes U_{a}$ giving rise to the following natural isomorphisms
of $\K$-modules:
\begin{align*}
	U_0\oslash (U_{\vav}\otimes\cdots\otimes U_1) 
		& \cong 	U_0\oslash \cdots \oslash U_{a-1}\oslash(U_{\vav}\otimes \cdots\otimes U_{a})\\
		& \cong	 U_0\oslash \cdots \oslash U_{a}\oslash(U_{\vav}\otimes \cdots\otimes U_{a+1})\\
		& \cong U_0\oslash \cdots \oslash U_{\vav}.
\end{align*}
A \emph{tensor space} is a $\K$-module $T$ equipped with a $\K$-module monomorphism
\begin{align}\label{eqn:mono}
	\bra{\cdot}:T\hookrightarrow U_0\oslash \cdots \oslash U_{\vav}.
\end{align}
An element $t\in T$ is a {\em tensor}, and
$\bra{t}\colon U_{\vav}\times\cdots\times U_1\bmto U_0$
is its associated multilinear map.
For $\ket{u}=\ket{u_{\vav},\ldots,u_1}\in\prod_{a\neq 0}U_{a}$, write 
$\bra{t} u\ra\in U_0$ to mean the evaluation of $\bra{t}$ at $\ket{u}$.
The set $\{U_0,\dots, U_{\vav}\}$ of modules is the \emph{frame} of $T$,
and $\vav$ is its {\em valence}. 
For brevity, we often write $S\subseteq U_0\oslash \cdots \oslash U_{\vav}$ 
to denote a set of tensors and its frame.

Put $\Omega:=\prod_{a\in [\vav]}\End(U_a)$, the ring of {\em transverse operators}
on the tensor space $U_0\oslash\cdots \oslash U_{\vav}$, where 
$\End(U_a)$ is the ring of $\K$-linear endomorphisms of $U_a$.
The group $\prod_{a\in[\vav]}\Aut(U_a)$ of 
invertible transverse operators is the group $\Omega^{\times}$ of units of $\Omega$.
For $\omega_a\in\End(U_a)$,
write $\ket{\omega_au_a,u_{\bar{a}}}$ to apply $\omega_a$ to
$u_a$ while leaving the other coordinates fixed. 
If, for each $a\in[\vav]$, the condition
$\langle t| u_a, U_{\comp{a}}\rangle=0$ implies $u_a=0$, then
$t$ is \emph{nondegenerate}; if
$U_0=\langle t|U_{\comp{0}}\rangle$ then $t$ is {\em full}.
We say $t$ is {\em fully nondegenerate} if it is full and nondegenerate,
and we lose no essential information by assuming all our tensors are 
of this type.

\subsection{Main results}
We adopt Albert's \emph{autotopisms} and Leger--Luks' 
\emph{generalized derivations}~\cite{LL:gen-der}  as the principal invariants to study.
For $S\subset U_0\oslash \cdots \oslash U_{\vav}$,
\begin{align}
\label{eq:define-der}
\Der(S) &=
\left\{ \omega\in \Omega
\;\middle |\;  \forall t\in S, \forall u\in\prod_{a\ne 0}U_a,~
\omega_0\langle t\mid u\rangle  = 
			\sum_{c\in [\vav]-0} \langle t\mid \omega_c u_c,u_{\comp{c}}\rangle
			\right\}
\end{align}
is the (Lie) algebra of {\em derivations} of $S$, and 
\begin{align}
\label{eq:define-aut}
\Aut(S) & =
\left\{ \alpha\in \Omega^{\times}
\;\middle |\;  \forall t\in S, \forall u\in\prod_{a\ne 0}U_a,~
\alpha_0\langle t\mid u\rangle  =   \langle t\mid \alpha_{\bar{0}} u_{\bar{0}}\rangle
			\right\}
\end{align}
is the group of {\em automorphisms} (also called {\em autotopisms}) of $S$. 
For $0<a<b\leq \vav$, put $\Omega_{0a}=\End(U_0)\times\End(U_a)$,
$\Omega_{ab}=\End(U_a)^{{\rm op}}\times\End(U_b)$, and define  
\begin{equation}
\begin{split}
   \Nuc_{ab}(S) & = \left\{ \omega\in \Omega_{ab}
   ~\middle|~ \forall t\in S,\;\forall u\in \prod_{c\ne 0}U_c,~
 		\langle t\mid \omega_a u_a, u_{\comp{a}}\rangle 
   = \langle t\mid \omega_b u_b, u_{\comp{b}} \rangle \right\} \\ 
   \Nuc_{0a}(S) & = \left\{ \omega\in \Omega_{0a} ~\middle|~ 
    \forall t\in S,\; \forall u\in\prod_{c\ne 0} U_c, ~
 		\omega_0\langle t\mid u\rangle 
   = \langle t\mid \omega_a u_a, u_{\comp{a}}\rangle \right\}
\end{split},
\end{equation}
the
{\em nuclei} of $S$.
The opposite ring in the first equation
ensures that both types of nuclei are associative rings.
For $A\subset [\vav]$, 
put $\Omega_A=\prod_{a\in A}\End(U_a)$. 
Define the {\em centroids} of $S$ 
to be the associative rings
\begin{equation}\label{eq:cents}
\begin{split}
   \Cen_A(S) & = \left\{ \omega\in \Omega_A ~\middle|~ 
   \forall t\in S,\; \forall u\in \prod_{c\ne 0} U_c,\;
      \forall a,b\in A, 
      \langle t\mid \omega_a u_a,u_{\comp{a}}\rangle =
   \langle t\mid \omega_b u_b, u_{\comp{b}}\rangle \right\}\\
   \Cen_{A\cup 0}(S) & = \left\{ \omega\in \Omega_{A\cup 0} 
      ~\middle|~ \forall t\in S,\; 
      \forall u\in\prod_{c\ne 0} U_c, \;\forall a\in A, 
		\omega_0\langle t\mid u\rangle =
      \langle t\mid \omega_a u_a, u_{\comp{a}}\rangle \right\}.
\end{split},
\end{equation}
where $A\subseteq [\vav]-0$.
The assumption that $S$ is fully nondegenerate
ensures that all centroids are commutative. 
For $2<k\leq \vav$, put
\begin{align}
\label{eq:amalgamate}
	\Nuc(S) & =\bigoplus_{A\in\binom{[\vav]}{2}} \Nuc_A(S) & 
	\Cen_k(S) & = \bigoplus_{A\in\binom{[\vav]}{k}} \Cen_A(S).
\end{align} 

Our main theorems, which generalize Rosenberg--Zelinsky sequences to 
derivations and autotopisms of tensors, are the following.

\begin{mainthm}
\label{thm:exact-sequencesA}
For each fully nondegenerate 
$S\subseteq U_0\oslash \cdots \oslash U_{\vav}$,
there is an exact sequence of $\K$-Lie algebras
\begin{align*}
0 \rightarrow \Cen_{\vav}(S) \rightarrow \cdots \rightarrow \Cen_3(S) \rightarrow \Nuc(S) \rightarrow \Der(S).
\end{align*}
\end{mainthm}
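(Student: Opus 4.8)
The plan is to exhibit the displayed sequence as a chain complex modelled on the simplex with vertex set $[\vav]$, in which the output index $0$ plays the role of a distinguished cone point, and then to establish exactness by solving for a missing coordinate via full nondegeneracy. First I would write down the differentials. If $A\in\binom{[\vav]}{k}$ and $a\in A$, then the defining relations of $\Cen_A(S)$ (resp.\ $\Nuc_A(S)$) contain those of $\Cen_{A\setminus\{a\}}(S)$, so deleting the $a$-coordinate gives a $\K$-linear map $\Cen_A(S)\to\Cen_{A\setminus\{a\}}(S)$; assembling these over $a\in A$ with alternating simplicial signs defines $\partial\colon\Cen_k(S)\to\Cen_{k-1}(S)$ and, at the bottom, $\Cen_3(S)\to\Nuc(S)$ and $\Nuc(S)\to\Der(S)$. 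The one asymmetry is at $0$: an element of $\Nuc_{0a}(S)$ is sent to the derivation supported on $\{0,a\}$, while an element of $\Nuc_{ab}(S)$ with $0\notin\{a,b\}$ is sent to the derivation supported on $\{a,b\}$ with a relative sign, so that the single relation $\omega_0\langle t\mid u\rangle=\sum_{c\ne 0}\langle t\mid\omega_c u_c,u_{\comp{c}}\rangle$ defining $\Der(S)$ comes out correctly. This sign rule — opposite on a pair of input slots, equal on a slot paired with $0$ — is precisely the bookkeeping recorded by the opposite ring in $\Omega_{ab}$. I would confirm each $\partial$ lands in its asserted target by substituting the source relations.

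Next I would verify that this is a complex of Lie algebras. Checking $\partial^2=0$ is the usual cancellation of the two ways of deleting an unordered pair of coordinates, carrying signs $(-1)^{i+j}$; the only extra point is that the convention distinguishing $0$ is compatible with this cancellation, which the opposite-ring bookkeeping guarantees, so this step is mechanical. That the maps are morphisms of $\K$-Lie algebras is nearly automatic: the full nondegeneracy hypothesis forces every $\Cen_A(S)$ to be commutative, hence abelian as a Lie algebra, so only the bottom map $\Nuc(S)\to\Der(S)$ requires the bracket to be checked directly.

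The substance is exactness, and here I would build a partial contracting homotopy that inserts the coordinate $0$. Given $\omega\in\Cen_A(S)$ with $0\notin A$, the maps $u\mapsto\langle t\mid\omega_a u_a,u_{\comp{a}}\rangle$ agree for all $a\in A$; write $D_\omega(u)$ for their common value. To cone on $0$ I must produce $\omega_0\in\End(U_0)$ with $\omega_0\langle t\mid u\rangle=D_\omega(u)$ for every $u$. Fullness makes the vectors $\langle t\mid u\rangle$ span $U_0$, so $\omega_0$ is unique once it exists; the content is existence, i.e.\ that $D_\omega$ factors through the evaluation $\langle t\mid-\rangle$. The hard part will be exactly this well-definedness, and it is where both hypotheses are spent: I expect nondegeneracy to force $D_\omega(u)$ to depend only on $\langle t\mid u\rangle$, while the fact that actions on distinct coordinates commute — and each commutes with the action on $0$ — is what propagates the factorization across the coordinates of $A$. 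Granting the homotopy on cycles, the identity $\partial h=\mathrm{id}$ on cycles shows that every cycle in $\Cen_k(S)$ for $3\le k\le\vav$ and in $\Nuc(S)$ is a boundary, which yields exactness at the interior terms; the left-hand injectivity $0\to\Cen_{\vav}(S)$ then follows by tracking the vanishing of the one boundary that can contribute there. Exactness is deliberately not claimed at $\Der(S)$: the homotopy cannot be pushed past the output slot, and the resulting cokernel is the Galois-type invariant the sequence is designed to isolate, in parallel with the $\mathcal{J}(A)$ of the classical Rosenberg--Zelinsky sequences.

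Should the direct homotopy prove unwieldy — in particular should the factorization step resist a uniform treatment — I would instead induct on the valence $\vav$. Fixing the last module $U_{\vav}$ reexpresses the data of $S$ in terms of a tensor of valence $\vav-1$, and comparing the complex for $S$ with the complex for that smaller tensor by a snake-lemma (Mayer--Vietoris) argument reduces exactness to the bilinear base case. That base case is the Skolem--Noether analysis of~\cite{Wilson:Skolem-Noether} invoked in the introduction, from which, as indicated there, the sequence can then be propagated to higher valence.
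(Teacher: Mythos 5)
Your differentials (restrictions with signs, extension by zero into $\Der(S)$) do match the paper's maps $\Upsilon^k$, but your exactness argument rests on a claim that is false, and it is the load-bearing step. You assert that full nondegeneracy forces $D_\omega(u)=\langle t\mid \omega_a u_a,u_{\comp{a}}\rangle$ to depend only on $\langle t\mid u\rangle$, so that every $\omega\in\Cen_A(S)$ with $0\notin A$ cones to an element of $\Cen_{A\cup 0}(S)$. The paper's own matrix-multiplication example refutes this. Take $\langle t\mid \colon \M_{2\times 3}(\C)\times \M_{3\times 4}(\C)\bmto \M_{2\times 4}(\C)$, $\langle t\mid u_2,u_1\rangle = u_2u_1$, which is fully nondegenerate, and let $\omega=E_{21}\in\M_3(\C)$ be a matrix unit. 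The pair $(\omega_1,\omega_2)$ with $\omega_2u_2=u_2\omega$ and $\omega_1u_1=\omega u_1$ lies in $\Cen_{\{1,2\}}(t)=\Nuc_{21}(t)$ by associativity; yet for $u_2$ with single nonzero row $(0,1,0)$ and $u_1$ with single nonzero first row $(1,0,0,0)$ one has $u_2u_1=0$ while $(u_2\omega)u_1\neq 0$, so no $\omega_0$ with $\omega_0(u_2u_1)=(u_2\omega)u_1$ can exist. (Indeed, if your cone always existed, the restriction $\Cen_{A\cup 0}(S)\to\Cen_A(S)$ would be surjective, forcing the one-dimensional $\Cen_{\{0,1,2\}}(t)$ to surject onto $\Nuc_{21}(t)\cong\M_3(\C)$.) Restricting $h$ to cycles does not rescue your construction as stated, because you define $h$ componentwise and each component must be coned; for a genuine cycle the missing $0$-coordinate \emph{is} recoverable, but only from the other components of the cycle (e.g.\ its $\Nuc_{01}$-part), never from a factorization property of the single component. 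Any repair along those lines amounts to assembling preimages out of the components already present in the cycle---which is exactly what the paper does: exactness is proved by a purely formal echelonization of the $\{-1,0,1\}$ sign matrices representing the $\Upsilon^k$ (Lemma~\ref{lem:exact}), valid whenever $\Upsilon^k\circ\Upsilon^{k+1}=\epsilon$, with full nondegeneracy entering only to make the centroids commutative so that the maps are homomorphisms (Lemma~\ref{lem:sigma-hom}).

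There is a secondary gap in your sign conventions. Standard simplicial signs do not satisfy the constraints here: both deletions from a pair $\{0,b\}$ must carry the value $+1$, while for $0<a<b$ the two deletions from $\{a,b\}$ must carry opposite values with $\sigma(a,\{a,b\})=-1$ (Lemmas~\ref{lem:well-defined} and~\ref{lem:sigma-hom}); alternating signs give $\sigma(0,\{0,b\})=-1\neq+1=\sigma(b,\{0,b\})$. Whether a $0$-asymmetric convention can be made globally consistent with $\partial^2=0$ at all levels is precisely the nontrivial combinatorial content the paper isolates in Section~\ref{sec:Rihanna} (Rihanna's Lemma, Lemma~\ref{lem:RiRi}, on oddly acyclic orientations) and Lemma~\ref{lem:good-sigma}. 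Such a $\sigma$ does exist---for instance one can twist the simplicial signs by a suitable $\pm 1$-valued function on vertex sets---but declaring the check ``mechanical'' and attributing it to ``opposite-ring bookkeeping'' hides a genuine argument. Finally, your fallback of inducting on valence down to the bilinear Skolem--Noether case is not the paper's route either and is too sketchy to assess: the paper's induction on $\vav$ occurs only in constructing $\sigma$, never in proving exactness.
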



\begin{mainthm}
\label{thm:exact-sequencesB}
For each fully nondegenerate 
$S\subseteq U_0\oslash \cdots \oslash U_{\vav}$,
there is an exact sequence of groups
\begin{align*}
 1\to \Cen_{\vav}(S)^{\times}\to  \cdots \to \Cen_3(S)^{\times}\to
	 \Nuc(S)^{\times}
	   \to \Aut(S).
\end{align*}
\end{mainthm}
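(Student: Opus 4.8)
The plan is to prove Theorem~\ref{thm:exact-sequencesB} as the multiplicative counterpart of Theorem~\ref{thm:exact-sequencesA}, reusing its maps and its exactness architecture but recording everything at the level of unit groups. Each term is the unit group of an associative $\K$-algebra that already appears in Theorem~\ref{thm:exact-sequencesA}: the $\Cen_k(S)$ are commutative, $\Nuc(S)$ is associative, and $\Aut(S)\subseteq\Omega^{\times}$ is the unit-group analogue of $\Der(S)\subseteq\Omega$. So the strategy splits into three parts: (i) promote the connecting maps of Theorem~\ref{thm:exact-sequencesA} to group homomorphisms on unit groups; (ii) check the complex property; and (iii) establish exactness, which is where the real work lies.

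For step (i), I would observe that every interior map of Theorem~\ref{thm:exact-sequencesA} is assembled from the frame-restriction homomorphisms $\Omega_A\to\Omega_{A'}$ for $A'\subset A$, which forget the components outside $A'$. These are homomorphisms of associative rings, and by the defining relations \eqref{eq:cents} they carry $\Cen_A(S)$ into $\Cen_{A'}(S)$ (with the evident adjustment for frames containing $0$, where the output convention replaces an input one). Being ring homomorphisms, they send units to units, hence descend to group homomorphisms $\Cen_A(S)^{\times}\to\Cen_{A'}(S)^{\times}$. Assembling these over the facets of each $A$, with the simplicial signs now encoded multiplicatively by inverses, yields the maps $\Cen_k(S)^{\times}\to\Cen_{k-1}(S)^{\times}$; the terminal map $\Nuc(S)^{\times}\to\Aut(S)$ is the multiplicative analogue of $\Nuc(S)\to\Der(S)$, and one checks that a unit of the nucleus induces a genuine autotopism exactly as the corresponding nucleus element induced a derivation. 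For step (ii), the composite of two consecutive maps is the multiplicative $\partial^2$; it is trivial because each codimension-two face is reached by two restriction paths of opposite sign, which cancel in the target. Here the commutativity of the centroids, guaranteed by full nondegeneracy as noted after \eqref{eq:cents}, is essential: it makes the alternating product independent of the order in which facets are traversed, so that the two paths genuinely cancel.

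The crux is step (iii), and the honest obstacle is that the unit-group functor $R\mapsto R^{\times}$ is not exact, so one cannot simply apply $(-)^{\times}$ to Theorem~\ref{thm:exact-sequencesA}. Instead I would redo the exactness argument multiplicatively. The structural input that makes this possible is rigidity from nondegeneracy: if $\omega\in\Cen_A(S)$ restricts to the identity on some two-element subframe, then the centroid relations force its remaining components to fix every evaluation $\langle t\mid u\rangle$, whence $\omega$ is the identity by nondegeneracy; thus every restriction map is injective on unit groups, which already gives the initial $1\to\Cen_{\vav}(S)^{\times}$. Exactness at $\Cen_k(S)^{\times}$ then becomes a gluing statement: a family of centroid units on the size-$k$ faces that is a multiplicative cocycle glues to a single centroid unit on the enclosing size-$(k+1)$ frame. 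I would prove this gluing directly at the level of operators, noting that the glued operator is automatically a unit because its components are and the centroid is commutative; this is precisely what circumvents the failure of exactness of $(-)^{\times}$, since the preimage is built by hand rather than extracted from an additive one.

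I expect the genuine difficulty to be concentrated at the low end of the sequence, at $\Nuc(S)^{\times}$ and in the final map to $\Aut(S)$, where the associative nuclei need not be commutative and the rigidity of the upper centroids no longer applies. There the kernel of $\Nuc(S)^{\times}\to\Aut(S)$ must be identified with the image of $\Cen_3(S)^{\times}$, which amounts to showing that a tuple of nucleus units inducing the trivial autotopism differs from the identity exactly by a centroid unit; I would extract this from the same nondegeneracy bookkeeping, taking care that the opposite-ring convention in $\Nuc_{ab}(S)$ is tracked correctly, so that composition of inner autotopisms matches multiplication in $\Nuc(S)^{\times}$ rather than its opposite.
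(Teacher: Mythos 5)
Your architecture matches the paper's in outline---the terms are unit groups of the same associative algebras, the connecting maps are frame restrictions twisted by inverses, and exactness is correctly identified as the crux---but there is a genuine gap at precisely the point you treat as routine bookkeeping: the signs. You write that the maps carry ``simplicial signs now encoded multiplicatively by inverses'' and that the complex property holds because the two restriction paths to a codimension-two face ``cancel.'' In this setting, however, the standard alternating simplicial convention is \emph{not} available, and no convention can simply be assumed. The exponents are forced at the bottom of the sequence: a unit of $\Nuc_{0a}(S)$ maps to an autotopism with exponent $+1$ on both coordinates (the relation already swaps an output for an input), while a unit of $\Nuc_{ab}(S)$ with $0<a<b$ requires opposite exponents $(-1,+1)$ because of the opposite-ring structure; these are the constraints of Lemmas~\ref{lem:well-defined} and~\ref{lem:sigma-hom}, and the usual alternating signs violate the $a=0$ case of both. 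One must therefore prove that \emph{some} global assignment $\sigma$ of signs (equivalently, of which components to invert) exists that satisfies these forced values \emph{and} the diamond cancellation $\tau(C,a,b)=0$ of~\eqref{eq:tau} for every $C$ and all $a,b\notin C$. This existence statement is the combinatorial heart of the paper---proved by orienting the graph $\mathcal{G}_{\vav}$ on $2^{[\vav]}$ so that every diamond is oddly acyclic (Lemma~\ref{lem:RiRi} and Lemma~\ref{lem:good-graph}, packaged as Lemma~\ref{lem:good-sigma})---and nothing in your proposal supplies it. Without it the maps $\Cen_k(S)^{\times}\to\Cen_{k-1}(S)^{\times}$ and $\Nuc(S)^{\times}\to\Aut(S)$ are not even pinned down, so your steps (i) and (ii) cannot be carried out.

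On step (iii): you are right, in agreement with the paper, that exactness cannot be imported from Theorem~\ref{thm:exact-sequencesA} by applying $(-)^{\times}$; but your substitute---``prove this gluing directly at the level of operators''---is a restatement of exactness, not an argument. The paper's proof (Lemma~\ref{lem:exact}) writes each $\Upsilon^k$, one frame coordinate at a time, as a $\{-1,0,1\}$-matrix and echelonizes it by unimodular operations (permutations and $\pm 1$-transvections), using the identity $Y_{k+1}=-Z_{k+1}X_kZ_k$ supplied by the chain-complex condition; because these operations are invertible and symbolic, they apply verbatim to invertible operators, which is exactly how one proof serves both the Lie-algebra and group statements. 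Your correct observations---restriction sends centroid units to centroid units, injectivity at $\Cen_{\vav}(S)^{\times}$ follows from nondegeneracy, and the opposite ring in $\Nuc_{ab}(S)$ must be tracked so that composition of autotopisms matches multiplication of nucleus units (Lemma~\ref{lem:unit-embedding})---all appear in the paper, but they are the easy part; the two items above are where the content of the proof actually lies.
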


\subsection{Outline}
The paper is organized as follows.
In Section~\ref{sec:seq},  we describe more general operators than those used
in the sequences of Theorems~\ref{thm:exact-sequencesA} 
and~\ref{thm:exact-sequencesB}. In particular we demonstrate how our theorems 
constitute part of a general strategy to attach polynomial-based invariants to tensors, 
which are then analyzed by restricting to subsets of variables. Geometrically, this 
corresponds to 
taking sections and inspecting fibers.
Section~\ref{sec:exact} constructs the exact sequences in Theorems~\ref{thm:exact-sequencesA}
and~\ref{thm:exact-sequencesB}, and identifies 
a certain combinatorial property needed to prove exactness.  This property is examined
in isolation in Section~\ref{sec:Rihanna}, followed by the
proofs our main theorems in Section~\ref{sec:proofs}.
Section~\ref{sec:app-ex} provides several examples that help to illustrate the breadth and
power of our methods. These examples include detailed examinations of symmetries of 
tensors as well as a brief look at quantum particle entanglement, where two states are
distinguished using our exact sequences.

\section{Creating the sequences}
\label{sec:seq}
Theorems~\ref{thm:exact-sequencesA} and \ref{thm:exact-sequencesB}
concern groups and Lie algebras, yet feature exact sequences
whose terms are predominately associative algebras or their unit groups. 
Although this is a convenient 
conversion from a pragmatic viewpoint---associative algebras are better understood, 
structurally, than groups and Lie algebras---switching categories 
may seem to the reader unnatural.  
However, the conversion has a natural explanation
when viewed through a broader geometric lens.

To introduce our sequences and expose their purpose we
make use of a device introduced in  \cite{FMW:densors} that 
records operators on a tensor space using
polynomial identities.
This establishes a convenient algebro-geometric vocabulary.

\subsection{Operator sets}
In \cite{FMW:densors} the following definitions where introduced as a means to 
capture, in a uniform manner, a wide range of common tensor operators.  
We include enough detail here to prove our stated claims in this more general context.

Let $\{U_0,\dots, U_{\vav}\}$ be a frame of $\K$-modules,
$\Omega=\prod_{a\in[\vav]}\End(U_a)$ the ring of transverse
operators on this frame, and 
$\K[X] = \K[x_{\vav},\ldots,x_0]$. 
For $\omega\in \Omega$, $p(X)=\sum_{e} \lambda_{e} X^{e}\in \K[X]$, 
$t\in U_{\vav}\lversor \cdots \lversor U_0$,
and $\ket{u}=\ket{u_{\vav},\ldots,u_1}\in \prod_{a> 0}U_a$,
define
\begin{align*}
	\bra{t} p(\omega)\ket{u} &= 
		\sum_{e} \lambda_{e} \omega_0^{e_0}\bra{t} \omega_{\vav}^{e_{\vav}}u_{\vav},
			\ldots, \omega_1^{e_1}u_1\ra.
\end{align*}
For each set $S$ of tensors, and each $P\subset \K[X]$, let
$\langle S|P(\omega)\ket{U}$ be the subspace generated by
$\bra{t}p(\omega)\ket{u}$ as $t$ ranges over $S$, $p$ over $P$, and 
$\ket{u}$ over $\prod_{a>0} U_a$.  Then,
\begin{align}\label{eq:sat}
	\Op{S}{P} & = \left\{ \omega \in \prod_{a\in[\vav]} \End(U_a)\; \middle | \;
		\bra{S}P(\omega)\ket{U}=0\right\}
\end{align}
is the \emph{operator set} for the pair $S,P$.
It will help our intuition to consider the sets $\Op{S}{P}$ as geometries.  
Indeed, over an algebraically closed field these are 
algebraic zero-sets.  For other rings (such as finite fields, as required
of several problems in algebra) $\Op{S}{P}$ still has the 
structure of an affine $\K$-scheme \cite{FMW:densors}.  

\begin{remark}
\label{rem:the-polys}
The polynomials defining $\Op{S}{P}$ as an affine $\K$-scheme are 
derived from the formula for $P$, but they are in general
quite different from $P$.  For instance, the number of variables in $P$ is $\vav+1$,
whereas in general the polynomials describing $\Op{S}{P}$ involve $\sum_{a} (\dim U_a)^2$
variables.  Thus, when defining a function on $\Op{S}{P}$, one
should not expect its image or fibers to again be sets of the form $\Op{S}{P}$.
\end{remark}

\subsection{Fibers of restriction}
We describe a general approach to study the zero-sets $\Op{S}{P}$ for an arbitrary set of polynomials; 
eventually, we return to the cases with which 
we are most concerned here. 
We begin by restricting 
the operator sets $\Op{S}{P}$ to subsets of the frame.  
Fix $A\subset[\vav]$, and define the projection
\begin{align*}
	\Lambda_A & \colon \Op{S}{P}\to \prod_{a\in A} \End(U_a), &
		\Lambda_A(\omega_b\colon b\in[\vav]) &=  (\omega_a\colon a\in A).
\end{align*}
Write $\Op{S}{P}|_A$ for the image of $\Lambda_A$,
which  
as noted in Remark~\ref{rem:the-polys}
need not be an operator set. However, 
the fibers of $\Lambda_A$ are still comprised of operators that satisfy
the polynomials $P$ on tensors $S$.  These we might  describe
 as sets $\Op{S}{Q}$ for various $Q$ related to $P$.
Extending our notation slightly, for $\omega\in \Op{S}{P}$, write
\begin{align*}
	\Op{S}{P(\omega_A,X_{\comp{A}})} = \Lambda_A^{-1}(\omega) & = \{ (\tau_{\comp{A}},\omega_A)\mid
		\bra{S}P(\tau_{\comp{A}},\omega_A)\ket{U}=0\}.
\end{align*}
There creates two issues.  
First, the formula depends on $\omega_A$. We can largely 
ignore this issues, however, since fibers over generic points---those not lying on a proper subvariety---are
invariant for a fixed irreducible component.
Thus, we have a notion of generic fibers over the components of
$\Op{S}{P}$ that is independent of $\omega$.
Secondly, $P(\omega_A,X_{\comp{A}})$ is
partially evaluated at linear operators and thus is no longer a polynomial in $\K[X]$.
To get to a polynomial, it suffices to evaluate $P$ at 
$\omega_A=(\lambda_a 1_{U_a} \mid a\in A)$
with $\lambda_a\in\K$. If $\omega_A$ is generic, its fibers are the operator sets 
\begin{align*}
	P_A(X_{\comp{A}}) & = P(\lambda_A, X_{\comp{A}})\in \K[X_{\comp{A}}].
\end{align*}
In this way, a generic fiber is isomorphic to $\Op{S}{P_A}$ and can be regarded 
as an operator set, as in~\eqref{eq:sat}.   Indeed,
we will confine ourselves to cases where $\Op{S}{P}$ is a group and 
$\Lambda_A$ is a group homomorphism; here, $\lambda_a\in\{0,1\}$
will be a convenient choice for us to construct our sequences explicitly.

\subsection{Polynomials defining groups and algebras}
The following fact---which follows directly from 
definitions~\eqref{eq:define-der},
~\eqref{eq:define-aut} and~\eqref{eq:sat}---concerns 
two specific polynomials related 
to the derivation algebra and the automorphism group of $S$. 

\begin{fact}\label{fact:aut-der}
If $D(X)=x_{\vav}+\cdots+x_1-x_0$
and $G(X)=x_{\vav}\cdots x_1-x_0$, then
\begin{align*}
	 \Der(S)  & = \Op{S}{D} & 
	\Aut(S) & = \Op{S}{G}.
\end{align*}
\end{fact}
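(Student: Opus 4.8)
The plan is to verify the two claimed equalities by direct substitution of the specified polynomials into the definition of the operator set in equation~\eqref{eq:sat}, and then to recognize the resulting conditions as precisely the defining conditions of $\Der(S)$ and $\Aut(S)$ from~\eqref{eq:define-der} and~\eqref{eq:define-aut}. Since the statement asserts that this follows \emph{directly} from the definitions, the work is bookkeeping: unwinding the notation $\bra{t}p(\omega)\ket{u}$ for the two specific polynomials and matching terms.

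First I would treat the derivation case. Setting $p(X) = D(X) = x_{\vav} + \cdots + x_1 - x_0$, each monomial has total degree one, so in the formula for $\bra{t}p(\omega)\ket{u}$ the exponent vector $e$ is a standard basis vector (with coefficient $+1$ for $x_1,\ldots,x_{\vav}$ and $-1$ for $x_0$). Thus $\omega_a^{e_a}$ is either $\omega_a$ (when $e_a=1$) or the identity (when $e_a=0$), and one computes
\begin{align*}
\bra{t}D(\omega)\ket{u} = \sum_{c\in[\vav]-0}\langle t\mid \omega_c u_c, u_{\comp{c}}\rangle - \omega_0\langle t\mid u\rangle.
\end{align*}
Requiring this to vanish for all $t\in S$ and all $\ket{u}\in\prod_{a>0}U_a$ is exactly the condition $\langle S\mid D(\omega)\ket{U}=0$ defining $\Op{S}{D}$, and it coincides verbatim with the defining relation in~\eqref{eq:define-der}. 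Hence $\Der(S)=\Op{S}{D}$.

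Next I would treat the automorphism case analogously. Setting $p(X)=G(X)=x_{\vav}\cdots x_1 - x_0$, the leading monomial $x_{\vav}\cdots x_1$ has exponent vector with a $1$ in each slot $a\in[\vav]-0$ and a $0$ in slot $0$, while $-x_0$ contributes exponent vector $e_0=1$ with all other entries zero. Substituting gives
\begin{align*}
\bra{t}G(\omega)\ket{u} = \langle t\mid \omega_{\vav}u_{\vav},\ldots,\omega_1 u_1\rangle - \omega_0\langle t\mid u\rangle = \langle t\mid \omega_{\comp{0}}u_{\comp{0}}\rangle - \omega_0\langle t\mid u\rangle.
\end{align*}
Demanding that $\langle S\mid G(\omega)\ket{U}=0$ over the operators $\omega\in\Omega^{\times}$ (the unit group being where $\Op{S}{P}$ restricts to a group, as discussed after Remark~\ref{rem:the-polys}) is precisely the condition in~\eqref{eq:define-aut}, so $\Aut(S)=\Op{S}{G}$. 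The one point warranting a word of care—the sole possible obstacle—is the quantifier over $\omega$: the operator set $\Op{S}{G}$ is a priori cut out inside all of $\Omega$, whereas $\Aut(S)$ lives in $\Omega^{\times}$. I would note that the definition~\eqref{eq:define-aut} already restricts to units, so the equality is understood as an identification of subsets of $\Omega^{\times}$; no genuine difficulty arises, since the polynomial identity is the same in either ambient set. Beyond this, the proof is a one-line substitution for each polynomial, which is why the authors present it as a \emph{fact} rather than a theorem.
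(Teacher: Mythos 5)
Your proof is correct and takes essentially the same approach as the paper: the paper offers no separate argument, asserting the fact ``follows directly from definitions''~\eqref{eq:define-der}, \eqref{eq:define-aut} and~\eqref{eq:sat}, and your substitution of $D$ and $G$ into~\eqref{eq:sat} followed by matching terms is exactly that direct verification. Your closing remark about invertibility is also consistent with the paper, which resolves the same point by the convention (stated immediately after the fact) that for group-type polynomials one considers only invertible operators in $\Op{S}{G}$.
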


Hereafter, we assume $P$ is chosen so that $\Op{S}{P}$
is closed under one of two group operations: addition of endomorphisms,
or composition of automorphisms.
(A characterization of such $P$ is given
in \cite{FMW:densors}, but it follows easily from
Fact~\ref{fact:aut-der} that both $D(X)$ and $G(X)$ have this property.)
In the latter case we still write $\Op{S}{P}$ but consider only invertible endomorphisms.  
Let $\epsilon$ denote the appropriate identity ($0$ or $1$) for $\Op{S}{P}$, 
which we interpret naturally as a constant in $\K$.
Each projection map, $\Lambda_A$ is a group homomorphism.
Thus, the fibration we created from $\Lambda_A$ has
a generic fiber, since every fiber is a coset of the kernel.  In particular, for each $A\subseteq[\vav]$
we have an exact sequence
\begin{equation}\label{eq:zero-set-seq}
\{\epsilon\} \longrightarrow \Op{S}{P_A}|_A \longrightarrow \Op{S}{P} \overset{\Lambda_A}{\longrightarrow} \Op{S}{P}|_A \longrightarrow \{\epsilon\}.
\end{equation}
Translated into the language of Section~\ref{sec:intro}, we observe the origins
of our replacements for inner derivations and inner automorphisms.
\begin{fact}
For each $\emptyset \ne A\subseteq [\vav]$, there are exact sequences
\begin{center}
\begin{tikzcd}
	0 \arrow[r] & \Der_A(S)\arrow[r, hookrightarrow] 
		& \Der(S) \arrow[r,"\Delta_A"] & \Der(S)|_A\arrow[r] & 0\\
	0 \arrow[r] & \Aut_A(S)\arrow[r, hookrightarrow] 
		& \Aut(S) \arrow[r,"\Gamma_A"] & \Aut(S)|_A\arrow[r] & 0	
\end{tikzcd},
\end{center}
where $\Der_A(S)=\Op{S}{D_A}|_A$ and $\Aut_A(S)=\Op{S}{G_A}|_A$.
(For emphasis, when specializing to derivations and autotopisms, 
shall replace the restriction maps $\Lambda$ with $\Delta$ and $\Gamma$, respectively.)
\end{fact}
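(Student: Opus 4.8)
The plan is to obtain both sequences as instances of the general exact sequence~\eqref{eq:zero-set-seq}, specialized through Fact~\ref{fact:aut-der}. That fact identifies $\Der(S)=\Op{S}{D}$ for $D(X)=x_{\vav}+\cdots+x_1-x_0$ and $\Aut(S)=\Op{S}{G}$ for $G(X)=x_{\vav}\cdots x_1-x_0$. As already observed, $\Op{S}{D}$ is closed under addition of endomorphisms and $\Op{S}{G}$ under composition of automorphisms, so each of $D$ and $G$ is an admissible choice of $P$ in the sense required for~\eqref{eq:zero-set-seq}; the derivation case carries the additive identity $\epsilon=0$ and the automorphism case the multiplicative identity $\epsilon=1$. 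The two sequences in the statement are then simply~\eqref{eq:zero-set-seq} read with $P=D$ and with $P=G$.

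First I would record that the restriction map $\Lambda_A$ is a homomorphism for each of the two group structures, which justifies renaming it $\Delta_A$ and $\Gamma_A$ respectively. This is already granted in general in Section~\ref{sec:seq}, and concretely it holds because the operations on $\Omega$ are componentwise: $\Lambda_A(\omega+\omega')=(\omega_a+\omega_a'\colon a\in A)$ for derivations, and $\Lambda_A(\alpha\alpha')=(\alpha_a\alpha_a'\colon a\in A)$ for automorphisms. Its image is by definition $\Der(S)|_A$ (respectively $\Aut(S)|_A$), so the map is surjective onto that image and exactness at the right-hand term is immediate; exactness at the left-hand term is just injectivity of the inclusion of the kernel.

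Next I would identify the kernel with the stated operator set. The kernel of $\Lambda_A$ consists of those $\omega\in\Op{S}{P}$ that are trivial on the coordinates indexed by $A$, i.e.\ $\omega_a=\epsilon$ for every $a\in A$; this is exactly the fiber over the identity point $\lambda_A=(\epsilon\colon a\in A)$ treated in the fibration discussion of Section~\ref{sec:seq}, namely the operator set $\Op{S}{P_A}|_A$ of~\eqref{eq:zero-set-seq} with $P_A=P(\lambda_A,X_{\comp A})$. Performing the partial evaluation with the additive identity $\epsilon=0$ produces $D_A=D(0_A,X_{\comp A})$ and hence the kernel $\Der_A(S)=\Op{S}{D_A}|_A$, while the multiplicative identity $\epsilon=1$ produces $G_A=G(1_A,X_{\comp A})$ and hence $\Aut_A(S)=\Op{S}{G_A}|_A$. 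This gives exactness at the middle term, since the kernel of $\Lambda_A$ coincides with the image of the displayed inclusion.

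The only point that requires care---and the place I expect the reader to pause---is the bookkeeping in the last step: one must use the correct identity $\epsilon$ for each group (additive $0$ for $\Der$, multiplicative $1$ for $\Aut$) when partially evaluating $P$, since it is this choice that yields the correct polynomials $D_A,G_A$ and therefore the correct kernels, and likewise that accounts for the differing terminal objects $0$ and $1$ in the two sequences. Beyond this there is no genuine difficulty; the content is entirely the translation of the general construction~\eqref{eq:zero-set-seq} into the two specific polynomials of Fact~\ref{fact:aut-der}.
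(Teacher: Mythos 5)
Your proposal is correct and takes essentially the same route as the paper, which presents this Fact as a direct specialization of the general exact sequence~\eqref{eq:zero-set-seq} to $P=D$ and $P=G$ via Fact~\ref{fact:aut-der}, with $\Lambda_A$ a group homomorphism and the kernel identified as the fiber over the identity. Your closing point about using the additive identity $0$ for $D$ and the multiplicative identity $1$ for $G$ in the partial evaluation is precisely the bookkeeping the paper handles with its choice $\lambda_a\in\{0,1\}$.
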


\subsection{Chains of derivations and automorphisms}
We obtain a global outlook by summing over all restrictions to sets of a common cardinality. 
In this way we have one parameter to consider instead of exponentially many
subsets. 

\begin{fact}
\label{fact:first-step}
For each $k\in[\vav]$, there exists group homomorphisms $\Lambda_i^k$ $(i=1,2)$
that make the following diagram commute, and 
ensure that $\ker(\Lambda_1^k) = \im(\Lambda_2^k)$.
\begin{center}
\begin{tikzcd}
	\displaystyle\coprod_{A\in{\binom{[\vav]}{k}}} \Op{S}{P_A}|_A\arrow[r,"\Lambda^k_2"]	
		& \Op{S}{P} \arrow[r, "\Lambda^k_1"]\arrow[swap, dr, "\Lambda_A"] &  
			\displaystyle\prod_{A\in\binom{[\vav]}{k}} \Op{S}{P}|_A\arrow[d, "\pi_A"]\\
	\Op{S}{P_A}|_A\arrow[ur, hookrightarrow]\arrow[u, hookrightarrow, "\iota_A"] 
		\arrow[r]
		& \{\epsilon\} \arrow[r] & \Op{S}{P}|_A
\end{tikzcd}
\end{center}
\end{fact}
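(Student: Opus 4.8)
The plan is to obtain both homomorphisms formally from the short exact sequences~\eqref{eq:zero-set-seq}, one for each $A\in\binom{[\vav]}{k}$, and then to read off exactness in the middle by a diagram chase whose only non-formal ingredient is isolated in Section~\ref{sec:Rihanna}. To build $\Lambda_1^k$ I would invoke the universal property of the product: each restriction $\Lambda_A\colon\Op{S}{P}\to\Op{S}{P}|_A$ is a group homomorphism, so there is a unique homomorphism $\Lambda_1^k\colon\Op{S}{P}\to\prod_{A}\Op{S}{P}|_A$ satisfying $\pi_A\circ\Lambda_1^k=\Lambda_A$ for every $A$, which is precisely the commuting triangle on the right of the diagram. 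Dually, each sequence~\eqref{eq:zero-set-seq} identifies $\Op{S}{P_A}|_A$ with $\ker\Lambda_A$ and hence supplies an inclusion $\Op{S}{P_A}|_A\hookrightarrow\Op{S}{P}$; the universal property of the coproduct then yields a unique homomorphism $\Lambda_2^k$ with each $\Lambda_2^k\circ\iota_A$ equal to that inclusion, which is the commuting triangle on the left. The rest of the diagram commutes by construction, so existence and commutativity are formal. The two cases are treated uniformly: $\epsilon=0$ for derivations and $\epsilon=1$ for automorphisms, the only difference being that the coproduct is a direct sum in the additive case and a free product in the multiplicative case.

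With the maps in place I would first verify that the amalgamated sequence is a complex at $\Op{S}{P}$, that is, $\im\Lambda_2^k\subseteq\ker\Lambda_1^k$. Because $\Lambda_2^k$ is determined on the coproduct by the inclusions of the individual fibers, it is enough to evaluate $\Lambda_1^k\circ\Lambda_2^k$ on each summand $\Op{S}{P_A}|_A$ and then compose with an arbitrary projection $\pi_B$. Using $\pi_B\circ\Lambda_1^k=\Lambda_B$ together with the defining relations of the operator sets in~\eqref{eq:sat}, this reduces the required vanishing to the exactness of the individual sequences~\eqref{eq:zero-set-seq}, giving this containment.

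The substance of the statement is the reverse containment $\ker\Lambda_1^k\subseteq\im\Lambda_2^k$: an operator that is detected as trivial by every restriction $\Lambda_A$ with $|A|=k$ must in fact be expressible, through $\Lambda_2^k$, in terms of the fibers $\Op{S}{P_A}|_A$. I expect this to be the main obstacle, since it is exactly here that the overlaps among the index sets $A$ interact, and knowing that an operator is trivial on each $k$-subset separately does not by itself display it as a combination of operators coming from the fibers. My plan is to abstract this requirement into the purely combinatorial property studied in Section~\ref{sec:Rihanna} and to invoke that property to close the diagram chase, thereby securing $\ker\Lambda_1^k=\im\Lambda_2^k$ and completing the proof.
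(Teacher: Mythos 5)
Your construction of the two maps does match the paper's intent: $\Lambda_1^k$ is the product of the restrictions $\Lambda_A$, and $\Lambda_2^k$ amalgamates the fiber inclusions supplied by~\eqref{eq:zero-set-seq}; indeed the paper's entire proof is the one-line remark that the Fact follows from the sequences~\eqref{eq:zero-set-seq}, with no further machinery. The problem is your division of the exactness claim into an ``easy'' and a ``hard'' half: it is exactly backwards, and the half you outsource to Section~\ref{sec:Rihanna} cannot be handled there. The containment you call the substance of the statement, $\ker\Lambda_1^k\subseteq\im\Lambda_2^k$, is immediate: since $\pi_A\circ\Lambda_1^k=\Lambda_A$ for every $A$, we have $\ker\Lambda_1^k=\bigcap_{A\in\binom{[\vav]}{k}}\ker\Lambda_A$, and for $k\geq 1$ every index $a\in[\vav]$ lies in some $A\in\binom{[\vav]}{k}$, so an element of this intersection has every coordinate equal to $\epsilon$. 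Thus $\ker\Lambda_1^k=\{\epsilon\}$ and the containment is trivial; no interaction among overlapping index sets enters. Moreover, appealing to Section~\ref{sec:Rihanna} here is both irrelevant and circular: Lemma~\ref{lem:RiRi} governs the sign function $\sigma$ needed so that the \emph{twisted} maps $\Upsilon^k$ between the coproduct terms are well-defined homomorphisms with $\ker\Upsilon^k=\im\Upsilon^{k+1}$ (exactness at $\Nuc(S)$ and the $\Cen_k(S)$ terms), and that development begins with Lemma~\ref{lem:well-defined}, whose proof itself cites Fact~\ref{fact:first-step}.

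Conversely, the half you wave through as formal, $\im\Lambda_2^k\subseteq\ker\Lambda_1^k$, is where all the difficulty sits, and your reduction of it to the individual sequences is invalid. Exactness of~\eqref{eq:zero-set-seq} for the set $A$ controls only the composite $\Lambda_A\circ(\Lambda_2^k\circ\iota_A)$; to get $\Lambda_1^k\circ\Lambda_2^k=\epsilon$ you also need $\Lambda_B\circ(\Lambda_2^k\circ\iota_A)=\epsilon$ for every $B\neq A$, and this does not follow: under your own identification, an element of the fiber of $\Lambda_A$ over $\epsilon$ is an operator supported on the complementary coordinates $\comp{A}$, and its restriction to another $k$-set $B$ (which necessarily meets $\comp{A}$) is in general nontrivial. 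Concretely, for the matrix multiplication tensor with $\vav=2$ and $k=1$, the fiber of $\Lambda_{0}$ over $0$ inside $\Der(S)$ contains nucleus-type derivations $(\delta_2,\delta_1,0)$ with $\delta_1\neq 0$, and $\Lambda_{1}$ sends such an element to $\delta_1\neq\epsilon$. So your second step fails, and with it the chain-complex property that must precede any exactness argument. Any correct account of this Fact has to confront these cross-terms $\Lambda_B\circ\iota_A$ directly---equivalently, to interpret the diagram's exactness assertion with care---and that issue lives entirely in the passage from~\eqref{eq:zero-set-seq} to the amalgamated sequence, not in the combinatorics of Section~\ref{sec:Rihanna}.
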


Fact~\ref{fact:first-step} follows from the exact sequences in~\eqref{eq:zero-set-seq}.
Using $P=D$ we have the following corollary.  
\begin{fact}
There is an exact sequence
\begin{center}
\begin{tikzcd}
	\displaystyle\bigoplus_{A\in\binom{[\vav]}{k}}\Der_A(S)
		\arrow[r,"\Delta_2^k"] &
		\Der(S) \arrow[r, "\Delta_1^k"]
		& \displaystyle\prod_{A\in \binom{[\vav]}{k}} \Der(S)|_A
\end{tikzcd}
\end{center}
\end{fact}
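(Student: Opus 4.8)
The plan is to read off the sequence as the $P=D$ specialization of Fact~\ref{fact:first-step}, where $D(X)=x_{\vav}+\cdots+x_1-x_0$. By Fact~\ref{fact:aut-der}, $\Op{S}{D}=\Der(S)$, so in the generic three-term diagram of Fact~\ref{fact:first-step} the middle term becomes $\Der(S)$, each restricted image $\Op{S}{D}|_A$ becomes $\Der(S)|_A$, and each generic fiber $\Op{S}{D_A}|_A$ becomes $\Der_A(S)$; these are exactly the outer terms displayed. Renaming the restriction maps $\Lambda_i^k$ as $\Delta_i^k$, following the convention adopted for derivations, then furnishes the two homomorphisms $\Delta_1^k$ and $\Delta_2^k$.

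The one point that genuinely distinguishes this corollary from the generic statement is the passage from the coproduct to a direct sum, and that is the step I would write out. First I would confirm that $D$ is of additive type, i.e.\ that $\Op{S}{D}$ is closed under addition of transverse operators, with identity $\epsilon=0$. This is immediate from linearity: since $D$ is homogeneous of degree one and $\bra{t}p(\omega)\ket{u}$ is additive in $\omega$ whenever $p$ is linear, the defining condition $\bra{S}D(\omega)\ket{U}=0$ is preserved under addition. Hence each $\Lambda_A$ is an additive homomorphism, the ambient category is that of $\K$-modules, and the coproduct $\coprod_{A}\Op{S}{D_A}|_A$ of the submodules $\Der_A(S)\subseteq\Der(S)$ is simply their direct sum $\bigoplus_{A\in\binom{[\vav]}{k}}\Der_A(S)$, the left-hand term of the sequence.

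With these identifications in place, Fact~\ref{fact:first-step} supplies a commuting diagram together with the equality $\ker(\Delta_1^k)=\im(\Delta_2^k)$; since the displayed sequence is unbounded at both ends, this equality is precisely the asserted exactness at $\Der(S)$, and the corollary follows. I therefore expect no real obstacle inside this deduction. The substance lies instead in the exactness that it inherits from Fact~\ref{fact:first-step}: one containment of image and kernel is formal, while its nontrivial reverse is exactly what the combinatorial property isolated in Section~\ref{sec:Rihanna} is designed to secure, and in the present corollary it needs no argument beyond the additive translation above.
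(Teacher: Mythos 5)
Your proposal is correct and follows essentially the same route as the paper: the paper also obtains this fact as the $P=D$ specialization of Fact~\ref{fact:first-step}, identifying $\Op{S}{D}=\Der(S)$ via Fact~\ref{fact:aut-der} and replacing the coproduct by a direct sum because in the additive ($P=D$) case everything lives in the category of $\K$-modules, where finite coproducts and direct sums coincide. One small correction to your closing remark: the nontrivial exactness inherited from Fact~\ref{fact:first-step} is deduced in the paper from the fibration sequences~\eqref{eq:zero-set-seq} (every fiber is a coset of the kernel), not from the combinatorial property of Section~\ref{sec:Rihanna}, which enters only later to make the longer sequences of Theorems~\ref{thm:exact-sequencesA} and~\ref{thm:exact-sequencesB} exact.
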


In formulating a group analogue we face the problem that finite coproducts of
of groups are not isomorphic to products.  This will prevent us from extending
our sequence to the left in the case of $P=G$.  The solution to the problem is 
implied by Theorem~\ref{thm:exact-sequencesB}, where we swapped from 
groups to units in a ring.  

\begin{lemma}
\label{lem:unit-embedding}
For $0\leq a<b\leq \vav$  
there is a natural monomorphism
\begin{align*}
\Nuc_{ab}(S)^{\times} \to
	\Op{S}{G_{ab}}|_{ab}.
\end{align*}
\end{lemma}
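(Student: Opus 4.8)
The plan is to write the embedding down explicitly and then verify, in order, that it lands among automorphisms, that it is multiplicative (this is where the opposite ring built into $\Omega_{ab}$ is used), and that it is injective; the identification of the image with $\Op{S}{G_{ab}}|_{ab}$ is then bookkeeping. Given a unit $\omega=(\omega_a,\omega_b)$ of $\Nuc_{ab}(S)$, let $\iota(\omega)\in\Omega$ be the transverse operator with $\iota(\omega)_c=1_{U_c}$ for every $c\notin\{a,b\}$, and with the two remaining components given by $\iota(\omega)_a=\omega_a^{-1}$, $\iota(\omega)_b=\omega_b$ when $0<a<b$, and by $\iota(\omega)_0=\omega_0$, $\iota(\omega)_b=\omega_b$ when $a=0$. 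A unit of $\Nuc_{ab}(S)$ has both components invertible in their endomorphism rings, since the inverse taken in the nucleus provides componentwise two-sided inverses; hence $\omega_a^{-1}$ is defined and $\iota(\omega)\in\Omega^{\times}$.

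Next I would check $\iota(\omega)\in\Aut(S)$. When $a=0$ the automorphism identity $\omega_0\langle t\mid u\rangle=\langle t\mid\omega_b u_b,u_{\comp b}\rangle$ is verbatim the defining relation of $\Nuc_{0b}(S)$. When $0<a<b$ I would substitute $v_a=\omega_a^{-1}u_a$ and $v_c=u_c$ $(c\neq a)$ into the nucleus identity $\langle t\mid\omega_a v_a,v_{\comp a}\rangle=\langle t\mid\omega_b v_b,v_{\comp b}\rangle$: the left-hand side collapses to $\langle t\mid u\rangle$, while the right-hand side is precisely $\langle t\mid\iota(\omega)_{\comp 0}u_{\comp 0}\rangle$, which is the automorphism equation with trivial action on $U_0$. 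Only multilinearity is used.

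The crux is multiplicativity, and it is exactly why the $a$-factor of $\Omega_{ab}$ is an opposite ring. The product in $\Nuc_{ab}(S)$ carries $(\omega,\eta)$ to $(\eta_a\omega_a,\omega_b\eta_b)$ when $0<a<b$ and to $(\omega_0\eta_0,\omega_b\eta_b)$ when $a=0$. Composing componentwise in $\Omega^{\times}$, the $a$-slot of $\iota(\omega)\iota(\eta)$ is $\omega_a^{-1}\eta_a^{-1}=(\eta_a\omega_a)^{-1}$, which agrees with the $a$-slot of $\iota(\omega\eta)$; the $b$-slot (and, when $a=0$, the $0$-slot) agree directly, no inversion being applied there. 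Thus $\iota(\omega\eta)=\iota(\omega)\iota(\eta)$: inverting on the opposite factor is exactly what turns the reversed product into genuine composition. Injectivity is clear because $\iota(\omega)$ recovers $\omega_a$ and $\omega_b$, and since every operator in the image is trivial off $\{a,b\}$, recording its $\{a,b\}$-components loses nothing and places it inside the localized operator set $\Op{S}{G_{ab}}|_{ab}$, giving the asserted monomorphism.

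Finally, naturality is immediate: $\iota$ is assembled only from the evaluation pairing and the componentwise structure of $\Omega$, so it commutes with frame isomorphisms and with the $\K$-scheme structure on operator sets. The step I expect to demand the most care is the multiplicativity check, where it is easy to misplace the inversion or reverse a composition; getting it right is precisely what explains why $\Nuc_{ab}(S)$, rather than a bare product of endomorphism rings, is the correct associative carrier for these inner symmetries.
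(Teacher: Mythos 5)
Your proof is correct and takes essentially the same route as the paper: the identity map when $a=0$, and the twist $(\omega_a,\omega_b)\mapsto(\omega_a^{-1},\omega_b)$ when $0<a<b$, which is precisely the paper's observation that $(\omega_a,\omega_b)\in \Op{S}{G_{ab}}|_{ab}$ exactly when $(\omega_a^{-1},\omega_b)\in\Op{S}{x_b-x_a}=\Nuc_{ab}(S)$. The extra verifications you carry out---multiplicativity via the opposite-ring structure, injectivity, and the identification of the image---are details the paper's terse proof leaves implicit, but they introduce no new idea.
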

\begin{proof}
If $a=0$ then $G_{ab}=x_b-x_0$ and the identity map provides the 
isomorphism.  Otherwise $0<a$, so $G_{ab}=x_b x_a-1$.  
It follows that if
$(\omega_a,\omega_b)\in \Op{S}{G_{ab}}|_{ab}$
then $(\omega_a^{-1},\omega_b)\in \Op{S}{x_b-x_a}$. 
\end{proof}

By composing the inclusion $\Nuc_{ab}(S)^{\times}\to \Op{S}{G_{ab}}$
with $\Op{S}{G_{ab}}\hookrightarrow \Op{S}{G}$ in the diagram of 
Fact~\ref{fact:first-step}, we can replace $\coprod$ with $\bigoplus$
in the category of rings. Then, restricting to groups of units, we obtain the following.
\begin{fact}\label{fact:nuke-aut}
There is an exact sequence
\begin{center}
\begin{tikzcd}
	\Nuc(S)^{\times}
		\arrow[r,"\Gamma_2"] &
		\Aut(S) \arrow[r, "\Gamma_1"]
		& \displaystyle\prod_{A\in \binom{[\vav]}{2}} \Aut(S)|_A.
\end{tikzcd}
\end{center}
\end{fact}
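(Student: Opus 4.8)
The plan is to obtain Fact~\ref{fact:nuke-aut} from the coproduct version of the sequence already in hand, trading the inconvenient free product on the left for the unit group $\Nuc(S)^\times$. Specializing Fact~\ref{fact:first-step} to $P=G$ and $k=2$ (renaming $\Lambda$ as $\Gamma$, and using $\Aut(S)=\Op{S}{G}$ from Fact~\ref{fact:aut-der}) yields an exact sequence
\[
\coprod_{A\in\binom{[\vav]}{2}} \Op{S}{G_A}|_A \longrightarrow \Aut(S) \stackrel{\Gamma_1}{\longrightarrow} \prod_{A\in\binom{[\vav]}{2}} \Aut(S)|_A,
\]
whose exactness at $\Aut(S)$ says that $\ker\Gamma_1$ is the image of the coproduct map, i.e.\ the subgroup of $\Aut(S)$ generated by the pair-supported pieces $\Op{S}{G_A}|_A$. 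It therefore suffices to build a homomorphism $\Gamma_2\colon \Nuc(S)^\times\to\Aut(S)$ with this same image; once that is done the free product may be discarded and the asserted exactness follows.

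First I would use Lemma~\ref{lem:unit-embedding} to realize, for each $A=\{a,b\}$, a monomorphism $\iota_A\colon\Nuc_A(S)^\times\to\Op{S}{G_A}|_A\hookrightarrow\Aut(S)$ onto the automorphisms supported on $A$; the opposite-ring convention on the first coordinate of $\Nuc_{ab}(S)$ is exactly what makes $\omega\mapsto(\omega_a^{-1},\omega_b)$ multiplicative, so $\iota_A$ is a group homomorphism. Since the unit group of a finite direct sum of rings is the direct product of the unit groups, \eqref{eq:amalgamate} gives $\Nuc(S)^\times=\prod_{A}\Nuc_A(S)^\times$, and I would set $\Gamma_2\bigl((\omega^A)_A\bigr)=\prod_A \iota_A(\omega^A)$. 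Granting that the subgroups $\iota_A(\Nuc_A(S)^\times)$ for distinct $A$ commute inside $\Aut(S)$ (the crux, below), this product is order-independent, $\Gamma_2$ is a well-defined homomorphism, and its image is the internal product $\prod_A \Op{S}{G_A}|_A$. Commuting factors generate exactly their internal product, so this image coincides with the subgroup generated by the pieces, which is $\ker\Gamma_1$; hence $\im\Gamma_2=\ker\Gamma_1$.

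The main obstacle is the commuting claim, which is where full nondegeneracy is essential and which is what lets one replace ``$\coprod$'' by ``$\bigoplus$''. The key observation is that an automorphism supported on $A=\{a,b\}$ acts on $\bra{t}$ as a slot operator which, by the defining nucleus relation, agrees with a slot operator on one of the two indices of $A$; operators acting on distinct tensor slots always commute, so two such automorphisms arising from overlapping pairs induce commuting operators on their shared slot, and faithfulness of each slot action---supplied by nondegeneracy for the input slots and by fullness for the $0$-slot---transports this back to a genuine commuting identity in the relevant $\End(U_c)$. Pairs meeting in no index commute trivially. I expect the delicate part to be the bookkeeping across the several cases (whether $0\in A$, and whether a shared index is an input or the output); this is the same combinatorial phenomenon isolated in Section~\ref{sec:Rihanna} and then deployed for the full Theorems~\ref{thm:exact-sequencesA} and~\ref{thm:exact-sequencesB}.
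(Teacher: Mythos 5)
Your proposal is correct and takes essentially the same route as the paper: specialize Fact~\ref{fact:first-step} to $P=G$, $k=2$, use the twisted embeddings of Lemma~\ref{lem:unit-embedding}, and then trade the coproduct for $\Nuc(S)^{\times}$ --- indeed, your pairwise-commutation argument (nondegeneracy for the input slots, fullness for $U_0$) is exactly the content the paper compresses into the phrase ``we can replace $\coprod$ with $\bigoplus$ in the category of rings,'' so you have supplied in detail the one step the paper leaves implicit. The only inaccuracy is your closing aside: that commutation bookkeeping is not the phenomenon treated in Section~\ref{sec:Rihanna}, which concerns only the sign function $\sigma$ needed to make the longer sequences of Theorems~\ref{thm:exact-sequencesA} and~\ref{thm:exact-sequencesB} well-defined and exact; the commutation needed for this Fact is self-contained and independent of those sign choices.
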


\begin{remark}\label{rem:nuc-to-der}
For $0<a<b\leq \vav$, $\Op{S}{x_a + x_b} = \Der_{ab}(S)$ 
is naturally isomorphic to 
$\Op{S}{x_a - x_b} = \Nuc_{ab}(S)$ as vector spaces. 
This implicit isomorphism explains how nuclei appear in Theorem~\ref{thm:exact-sequencesA} instead of derivations. 
\end{remark}

Hereafter, we treat all products as coproducts, and use superscripts $\omega^A$, for $A\subseteq[\vav]$, to record
the factor from which an operator is taken.
In this way we obtain explicit (additive) notation for the functions
$\Lambda_2^k$:
\begin{align}\label{eq:Lambda_2^k}
	\Lambda^k_2\left((\omega^A_{a}:a\in A) :A\in \binom{[\vav]}{k}\right)
		& = \sum_{A\in \binom{[\vav]}{k}} (\epsilon_{\comp{A}},\omega_A^A).
\end{align}
Now our goal is to extend the sequence to an exact sequence ending in $\{\epsilon\}$.

\subsection{Summary}
It is possible to state the exact sequences between the
algebras and $\Cen_k(S)$ and $\Nuc(S)$ by explicit formulas, and these
will be given below.  As noted, however, these maps are made
for the convenience of working with associative algebras, when in fact
that change in categories is unnatural.  For instance, Fact~\ref{fact:nuke-aut} depends
on a twisting of $\Nuc(S)$, and Remark~\ref{rem:nuc-to-der} is
a similar twisting to turn an associative ring into a Lie algebra.
Indeed, $\Nuc(S)$ and $\Cen(S)$ only become the rings we seek 
\emph{after} we restrict the operator sets $\Op{S}{P_A}$ to
$\prod_{a\in A} \End(U_a)$.
Later we reveal an exponential number of seemingly
arbitrary choices in signs that make the sequences.
Even so, as we hope we have demonstrated in this section,
there is a clear, canonical picture being interpreted through these choices.  
The following toy example provides a nice illustration
of the main ideas.

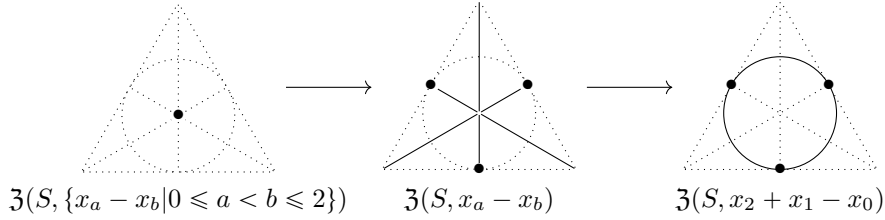
\begin{figure}[!htbp]
\begin{tikzpicture}
\node (Dt) at (0,-1.5) {$\mathfrak{Z}(S,x_2+x_1-x_0)$};
\node (D) at (0,0) {
	\begin{tikzpicture}[xscale=1.5,yscale=1.5,inner sep=0pt]
		\node (001) at (-0.86,-0.5) {};
		\node (010) at (0.86,-0.5) {};
		\node (011) at (0,-0.5) {$\bullet$};
		\node (100) at ( 0,1) {};
		\node (101) at (-0.43, 0.25) {$\bullet$};
		\node (110) at ( 0.43,0.25) {$\bullet$};
		\node (111) at (0,0) {};
		\draw[dotted] (001) -- (111) -- (110);
		\draw[dotted] (010) -- (111) -- (101);
		\draw[dotted] (100) -- (111) -- (011);
		\draw[dotted] (001) -- (011) -- (010);		
		\draw[dotted] (001) -- (101) -- (100);
		\draw[dotted] (010) -- (110) -- (100);				
		\draw (111) circle [radius=0.5];
	\end{tikzpicture}
};
\node (Nt) at (-4,-1.5) {$\mathfrak{Z}(S,x_a-x_b)$};
\node (N) at (-4,0) {
	\begin{tikzpicture}[xscale=1.5,yscale=1.5,inner sep=0pt]
		\node (001) at (-0.86,-0.5) {};
		\node (010) at (0.86,-0.5) {};
		\node (011) at (0,-0.5) {$\bullet$};
		\node (100) at ( 0,1) {};
		\node (101) at (-0.43, 0.25) {$\bullet$};
		\node (110) at ( 0.43,0.25) {$\bullet$};
		\node (111) at (0,0) {};
		\draw (001) -- (111) -- (110);
		\draw (010) -- (111) -- (101);
		\draw (100) -- (111) -- (011);
		\draw[dotted] (001) -- (011) -- (010);		
		\draw[dotted] (001) -- (101) -- (100);
		\draw[dotted] (010) -- (110) -- (100);				
		\draw[dotted] (111) circle [radius=0.5];
	\end{tikzpicture}
};
\node (Ct) at (-8,-1.5) {$\mathfrak{Z}(S, \{x_a-x_b | 0\leq a<b\leq 2\})$};
\node (C) at (-8,0) {
	\begin{tikzpicture}[xscale=1.5,yscale=1.5,inner sep=0pt]
		\node (001) at (-0.86,-0.5) {};
		\node (010) at (0.86,-0.5) {};
		\node (011) at (0,-0.5) {};
		\node (100) at ( 0,1) {};
		\node (101) at (-0.43, 0.25) {};
		\node (110) at ( 0.43,0.25) {};
		\node (111) at (0,0) {$\bullet$};
		\draw[dotted] (001) -- (111) -- (110);
		\draw[dotted] (010) -- (111) -- (101);
		\draw[dotted] (100) -- (111) -- (011);
		\draw[dotted] (001) -- (011) -- (010);		
		\draw[dotted] (001) -- (101) -- (100);
		\draw[dotted] (010) -- (110) -- (100);				
		\draw[dotted] (111) circle [radius=0.5];
	\end{tikzpicture}
};
\draw[arrows=->] (C) -- (N);
\draw[arrows=->] (N) -- (D);
\end{tikzpicture}
\caption{Geometric picture of the operator sets for the exact sequences 
when $\vav=2$.}\label{fig:plane}
\end{figure}
Defining $\bra{t}:\K\times \K\bmto \K$ with $\bra{t}u_2,u_1\ra = u_2u_1$, we have
\begin{equation}
\label{eq:data}
         \begin{array}{rcl}
	\Der(t)  & = & \Op{t}{x_2+x_1-x_0} = \{(\lambda_2,\lambda_1,\lambda_2+\lambda_1)\}~~\cong~~ \K^2\\
	\Op{t}{x_a-x_b} & = &  \{(\lambda_2,\lambda_1,\lambda_{0}) \mid \lambda_a=\lambda_b\}~~\cong~~ \K^2\\
	\Nuc_{ab}(t)  & = &  \Op{t}{x_a-x_b}|_{ab} = \{(\lambda,\lambda) \in \K\times \K\}~~\cong~~ \K\\
	\Cen(t) &  = & \{(\lambda,\lambda,\lambda)\}~~\cong~~ \K.
	\end{array}
\end{equation}
Figure~\ref{fig:plane} illustrates the sets in the projective space $\mathbb{P}^2$.
One can see from this figure and the data in~(\ref{eq:data}) that the nuclei as rings are not the same
as the operator sets, because of the restriction.  
Furthermore, the centroid is not contained, via inclusion, 
in the nuclei, but it is a subset of the full operator sets
$\Op{t}{x_a-x_b}$.  Similarly, the nuclei are not, in general, subsets of derivations. 
However, the full operator sets $\Op{t}{x_a-x_b}$ can be adjusted naturally to
$\Op{t}{x_a+x_b}$, and those intersect nontrivially with $\Der(t)$ from the 
embedding we described.  (In our picture 
because $|\K|=2$, where $-1=+1$, we do not see the distinction.)

For $\vav=2$ the sequence is described explicitly as follows.
Fix a projective line $\mathfrak{h}$ in $\mathbb{P}^{2}$.  Also fix
 points $\mathfrak{n}_{ab}$, $0\leq a<b\leq 2$
in general position on $\mathfrak{h}$, and a point $\mathfrak{c}$ not on $\mathfrak{h}$.
For example, if we use $\mathfrak{n}_{10}$, $\mathfrak{n}_{21}$, and $\mathfrak{c}$ to coordinatize
the (affine) plane then $\mathfrak{h}$ is given by a formula of the form
$\lambda_2 x_2+\lambda_1 x_1+\lambda_0 x_0=0$ with each $\lambda_a\neq 0$.  
This parametrizes an operator set $\Op{S}{\lambda_2 x_2+\lambda_1 x_1+\lambda_0 x_0}$ 
to which 
we attach a Lie algebra structure (though now the natural Lie brackets
are weighted by the coefficients $\lambda_a$).  We likewise use the lines
through each $\mathfrak{n}_{ab}$ and $\mathfrak{c}$ to define the sets of nucleus type
$\Op{S}{\mu_a^{ab} x_a-\mu_b^{ab} x_b}$, and the centroid lies
in their intersection.  After possibly deforming the scalars
of the nuclei, their operator sets intersect the derivation set at the points
$\mathfrak{n}_{ab}$. We obtain the desired embedding of $\Cen(S)$ into $\Nuc(S)$ 
with cokernel embedded into $\Der(S)$.  

All this occurs in generic terms and
can be reasoned for nonlinear structures such as the operator sets of groups.
Our specific interest in derivations and automorphisms are just two
natural demonstrations of an otherwise general technique.

\section{Exact sequences of groups and algebras}
\label{sec:exact}
We now specialize our discussion from Section~\ref{sec:seq} to the sequences in Theorems~\ref{thm:exact-sequencesA} 
and~\ref{thm:exact-sequencesB}. With our notation, 
we consider the case $k=2$, namely 
\[ 
\Lambda_2^2 : \bigoplus_{A\in{[\vav]\choose 2}}\Op{S}{P_A}|_A \rightarrow \Op{S}{P}. 
\]
We also restrict to $P\in\{D,G\}$;
in fact, we just consider $P=D$ and observe that the case
$P=G$ can be handled in a similar manner by working
just with invertible operators.
The function $\Lambda_2^2$ will now preserve further 
structure: it will be a Lie algebra homomorphism 
(or group homomorphism in the $P=G$ case).
Recall from Fact~\ref{fact:nuke-aut} and 
Remark~\ref{rem:nuc-to-der}, we must first 
adjust $\Lambda_2^2$ by twisting.

To accomplish this, we define an auxiliary function $\sigma$ that takes as input a pair of subsets of $[\vav]$ 
differing 
by one element, and returns a value in $\{-1,1\}$. 
Recall that subsets of $[\vav]$ of size 1 are written
without $\{\}$.
For $A=\{a,b\}$, put $C_A=x_a-x_b$, and 
define
$\Upsilon^2\colon \bigoplus_{A\in\binom{[\vav]}{2}} \Op{S}{C_A}|_A \rightarrow \Op{S}{P}$ by
\[ 
\bigoplus_{A\in\binom{[\vav]}{2}} (\omega_a^A : a\in A) \mapsto \left(\sum_{b\in[\vav]-a} \sigma(a, a\cup b)\cdot\omega_a^{a\cup b} : a\in [\vav]\right). 
\]
For $A\subseteq[\vav]$ of size at least two, let $C_A = \left\{ C_B : B\in\binom{A}{2}\right\}$. 
For $3\leq k \leq \vav+1$, if $A\in\binom{[\vav]}{k}$, from~\eqref{eq:cents} we have $\Op{S}{C_A}|_A=\Cen_A(S)$, so 
\[ 
\bigoplus_{A\in\binom{[\vav]}{k}} \Op{S}{C_A}|_A = \Cen_k(S). 
\]
For $2\leq k\leq \vav$, define $\Upsilon^{k+1}: \bigoplus_{A\in\binom{[\vav]}{k+1}} \Op{S}{C_A}|_A 
\rightarrow \bigoplus_{B\in\binom{[\vav]}{k}} \Op{S}{C_B}|_B$ by
\begin{align*}
    \displaystyle\bigoplus_{A\in\binom{[\vav]}{k+1}} (\omega_a^A : a \in A)\mapsto \bigoplus_{B\in\binom{[\vav]}{k}} \left( \sum_{a\notin B} \sigma(B, B\cup a) \cdot \omega_b^{B\cup a} : b\in B\right). 
\end{align*}

It remains to determine the conditions on $\sigma$ that ensure the functions $\Upsilon^k$ are well-defined and exact. 
The next result handles the former requirement.

\begin{lem}
\label{lem:well-defined}
For $3\leq k\leq \vav+1$, the maps $\Upsilon^k$ are well-defined.
The map $\Upsilon^2$ is well-defined if, and only if, 
\begin{equation}\label{eq:sigma-defined} 
\sigma(a, a\cup b) = \left\{ \begin{array}{ll} 
\sigma(b, a\cup b) & 0=a<b\leq \vav,\\
-\sigma(b, a\cup b) & 0<a<b\leq \vav.
\end{array}\right.
\end{equation}
\end{lem}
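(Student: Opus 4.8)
The plan is to analyze what "well-defined" means for each $\Upsilon^k$ and extract the precise constraint it places on $\sigma$. The key observation is that the domain $\bigoplus_{A} \Op{S}{C_A}|_A$ is a direct sum indexed by the size-$k$ subsets $A$, and the value $\Upsilon^k$ assigns to a summand is a tuple indexed by $b \in B$ for each size-$(k-1)$ subset $B$. Well-definedness will hinge on whether the formula respects the defining relations of the operator sets $\Op{S}{C_A}|_A$, i.e., whether the output genuinely lands in the target direct sum (each coordinate being a legitimate element of the relevant $\Op{S}{C_B}|_B$) and whether the formula is independent of any implicit choices in how we label elements of the unordered sets $A$ and $B$.

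First I would dispose of the cases $3 \le k \le \vav+1$. Here the target is $\bigoplus_B \Op{S}{C_B}|_B = \Cen_{k-1}(S)$, and since these are centroids, an element $(\omega_a^A : a \in A)$ lying in $\Op{S}{C_A}|_A = \Cen_A(S)$ already satisfies $\omega_a^A = \omega_{a'}^A$ up to the action dictated by the relations $C_B$ for every pair in $A$. The point is that for $k \ge 3$ the centroid condition forces all the relevant components to agree in the strong sense that the $\sigma$-signs cannot create an inconsistency: each output coordinate is a single sum over $a \notin B$, and there is no competing relation among these summands that $\sigma$ could violate. I would make this precise by checking that the image tuple satisfies the $C_B$-relations automatically, using the commutativity of centroids guaranteed by full nondegeneracy (as stated after \eqref{eq:cents}). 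Thus no condition on $\sigma$ is needed, matching the lemma's claim that these maps are unconditionally well-defined.

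The heart of the proof, and the main obstacle, is the case $k=2$. Here the subtlety is that $A = \{a,b\}$ is an \emph{unordered} pair, and $\Op{S}{C_A}|_A = \Nuc_{ab}(S)$ where $C_A = x_a - x_b$; but writing the pair the other way gives $C_A = x_b - x_a = -(x_a - x_b)$. So the formula for $\Upsilon^2$, which reads $\sigma(a, a\cup b)\cdot \omega_a^{a\cup b}$ in coordinate $a$, must produce the same element of $\Op{S}{P}$ regardless of which order we name the two elements of the pair. I would therefore write out the two coordinates $a$ and $b$ of a single summand $A = \{a,b\}$ and demand consistency with the relation that ties $\omega_a^A$ and $\omega_b^A$ together inside $\Nuc_{ab}(S)$. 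Using Remark~\ref{rem:nuc-to-der} and Lemma~\ref{lem:unit-embedding}, the nucleus relation differs according to whether $0 \in A$: when $a = 0$ the relation is $\omega_0 = \omega_b$ type (from $G_{0b} = x_b - x_0$, or $D$-analogue $x_b - x_0$), giving a symmetric constraint, whereas when $0 < a < b$ the relation carries an opposite-ring twist, introducing a sign. Tracking this sign through the requirement that swapping the roles of $a$ and $b$ leaves the output unchanged yields exactly $\sigma(a, a\cup b) = \sigma(b, a\cup b)$ when $0 = a$ and $\sigma(a, a\cup b) = -\sigma(b, a\cup b)$ when $0 < a$, which is \eqref{eq:sigma-defined}. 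Conversely, I would verify that imposing \eqref{eq:sigma-defined} is sufficient, so the equivalence is genuinely an "if and only if." I expect the bookkeeping of signs and the opposite-ring convention in $\Omega_{ab}$ to be where the real care is required, since a single misplaced sign reverses the whole condition.
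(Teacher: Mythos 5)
Your treatment of the cases $3\le k\le \vav+1$ is essentially the paper's argument and is fine: the only thing to verify is that each block of the output lies in $\Op{S}{C_B}|_B$, and this is automatic because the sign $\sigma(B,B\cup a)$ is constant across all coordinates $b\in B$ of a given summand, so each summand is $\pm 1$ times the restriction of a centroid element and the linear relations $C_B$ survive scaling and addition. (Commutativity of the centroids, which you invoke, is not needed here; it enters only in Lemma~\ref{lem:sigma-hom}.)

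The gap is in your $k=2$ case. You locate the constraint on $\sigma$ in the requirement that the formula be independent of how one orders the unordered pair $A=\{a,b\}$. But the formula for $\Upsilon^2$ never uses an ordering: $\sigma$ takes as arguments an element and a set, the zero set $\Op{S}{x_a-x_b}=\Op{S}{x_b-x_a}$ is insensitive to the sign of the polynomial, and coordinate $c$ of the output is $\sum_{d\ne c}\sigma(c,c\cup d)\,\omega_c^{c\cup d}$, with no choice made anywhere. So ``swapping the roles of $a$ and $b$ leaves the output unchanged'' holds for \emph{every} $\sigma$; it is vacuous and cannot yield \eqref{eq:sigma-defined}. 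What well-definedness actually means here---and what the paper checks---is that the output lands in the stated codomain $\Op{S}{P}$: for a single summand one must show
$\left(\epsilon_{\comp{A}},\, \sigma(a,A)\cdot\omega_a^A,\, \sigma(b,A)\cdot\omega_b^A\right)\in\Op{S}{P}$,
given that $(\omega_a^A,\omega_b^A)$ satisfies the nucleus relation. For $0<a<b$ the nucleus relation reads $\langle t\mid \omega_a u_a,u_{\comp{a}}\rangle=\langle t\mid \omega_b u_b,u_{\comp{b}}\rangle$, while membership in $\Op{S}{D}$ with $\epsilon$ in the remaining coordinates reads $\sigma(a,A)\langle t\mid \omega_a u_a,u_{\comp{a}}\rangle+\sigma(b,A)\langle t\mid \omega_b u_b,u_{\comp{b}}\rangle=0$, forcing $\sigma(a,A)=-\sigma(b,A)$; for $a=0$ the nucleus relation puts $\omega_0$ on the opposite side of the identity, forcing $\sigma(0,A)=\sigma(b,A)$. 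Note also that this sign is not produced by the opposite-ring twist in $\Omega_{ab}$, as you suggest---that twist concerns only the multiplicative structure relevant to Lemma~\ref{lem:sigma-hom}---but by the fact that $x_a$ and $x_b$ occur with the same sign in $D$, opposite to $x_0$. In short, the codomain-membership check that you correctly identified for $k\ge 3$ is exactly the one needed at $k=2$, and your proposal abandons it there in favor of a condition that gives nothing.
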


\begin{proof}
For $A=\{a,b\}$,
$
   \left(\epsilon_{\comp{A}}, \sigma(a, A)\cdot \omega_a^A, 
   \sigma(b, A) \cdot \omega_b^A\right) \in \Op{S}{P}
$
if, and only if, \eqref{eq:sigma-defined} holds (Fact~\ref{fact:first-step} and Lemma~\ref{lem:unit-embedding}). 
For $A\subseteq[\vav]$ of size $k\geq 3$, 
if $\omega = (\omega_a^A : a\in A)\in\Op{S}{C_A}|_A$, then 
\begin{align*} 
    \Upsilon^k(\omega) &= \bigoplus_{a\in A} \left( \sigma(A-a, A) \cdot \omega_b^A : b\in A-a\right) \\
    &= \bigoplus_{a\in A} \sigma(A-a, A)\cdot \left(\omega_b^A : b\in A-a\right).
\end{align*}
Thus, each summand is contained in $\Op{S}{C_{A-a}}|_{A-a}$. 
\end{proof}

For the next two lemmas, we assume that $\sigma$ satisfies~\eqref{eq:sigma-defined} so that the maps 
$\Upsilon^k$ are well-defined for $2\leq k \leq \vav+1$. 

\begin{lem}
\label{lem:sigma-hom}
Let $S$ be a fully nondegenerate tensor space. For all $3\leq k\leq \vav+1$, $\Upsilon^k$ is a homomorphism,
and $\Upsilon^2$ is a homomorphism if, and only if, 
\begin{align*}
(\forall 0\leq a<b\leq \vav) & & 
\sigma(a, a\cup b) = \left\{ \begin{array}{ll} 1 & a=0, \\ -1 & a>0. \end{array} \right. 
\end{align*}
\end{lem}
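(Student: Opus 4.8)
The plan is to read ``homomorphism'' as preservation of the Lie bracket: each $\Upsilon^k$ is visibly additive (every component is a fixed $\pm1$-linear combination of the inputs), so the only content is compatibility with brackets. I first dispose of the range $3\le k\le\vav+1$. Here every summand of the domain is a centroid $\Op{S}{C_A}|_A=\Cen_A(S)$ with $|A|\ge 3$, and by full nondegeneracy (as recorded after \eqref{eq:cents}) each such centroid is commutative; hence $\Cen_k(S)$ is an abelian Lie algebra and $\Upsilon^k([\omega,\omega'])=\Upsilon^k(0)=0$. For $k\ge 4$ the codomain is again a sum of centroids, hence abelian, so $[\Upsilon^k\omega,\Upsilon^k\omega']=0$ automatically. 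For $k=3$ the codomain is $\Nuc(S)$, and I would note that the image of $\Upsilon^3$ consists of commuting operators by the same transport argument used below (a centroid operator restricted to one slot commutes with any operator supported on a different slot). Either way $\Upsilon^k$ is a homomorphism for $3\le k\le\vav+1$, with no condition on $\sigma$ beyond the well-definedness relations \eqref{eq:sigma-defined}.

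The remaining work is $\Upsilon^2\colon\Nuc(S)\to\Op{S}{D}=\Der(S)$. Fix $\omega=\bigoplus_A(\omega^A_a)$ and $\omega'=\bigoplus_A((\omega')^A_a)$. Since $\Der(S)$ carries the ordinary commutator bracket of $\Omega=\prod_a\End(U_a)$, the $a$-component of $[\Upsilon^2\omega,\Upsilon^2\omega']$ is
\[
\Big[\sum_{b\ne a}\sigma(a,a\cup b)\,\omega^{a\cup b}_a,\ \sum_{b'\ne a}\sigma(a,a\cup b')\,(\omega')^{a\cup b'}_a\Big],
\]
which expands into a \emph{diagonal} part ($b=b'$) and a \emph{cross} part ($b\ne b'$). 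By contrast, the source bracket is computed inside each associative ring $\Nuc_{a\cup b}(S)$ separately, so $\Upsilon^2([\omega,\omega'])_a=\sum_{b\ne a}\sigma(a,a\cup b)\,[\omega^{a\cup b},(\omega')^{a\cup b}]_a$ carries no cross terms. Thus I would verify the identity in two steps: the cross terms on the right must vanish, and the diagonal terms must agree.

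The main obstacle, and the place where full nondegeneracy is essential, is the vanishing of the cross terms, i.e.\ that for distinct $b,b'\ne a$ the operators $\omega^{a\cup b}_a$ and $(\omega')^{a\cup b'}_a$ commute in $\End(U_a)$. I would prove this directly from the nucleus identities in \eqref{eq:cents}: to evaluate $\langle t\mid \omega^{a\cup b}_a(\omega')^{a\cup b'}_a u_a, u_{\comp a}\rangle$ one uses the $\{a,b\}$-identity to transport $\omega^{a\cup b}$ from slot $a$ to slot $b$, and the $\{a,b'\}$-identity to transport $(\omega')^{a\cup b'}$ from slot $a$ to slot $b'$; as $b$ and $b'$ are distinct slots these moves do not interfere, so both orders of composition yield the same value of $t$ evaluated with $\omega^{a\cup b}_b$ in slot $b$, with $(\omega')^{a\cup b'}_{b'}$ in slot $b'$, and with $u$ in every other slot. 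Since this holds for all $t\in S$ and all $u$, nondegeneracy in slot $a$ forces $\omega^{a\cup b}_a(\omega')^{a\cup b'}_a=(\omega')^{a\cup b'}_a\omega^{a\cup b}_a$, killing the cross terms.

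Finally I would match the diagonal terms. Expanding the bracket in $\Nuc_{a\cup b}(S)$ through its defining ring $\Omega_{ab}$, its slot-$a$ component equals $+[\omega^{a\cup b}_a,(\omega')^{a\cup b}_a]$ when $\End(U_a)$ enters without the opposite twist (namely when $a=0$, or when $a$ is the larger index of the pair) and $-[\omega^{a\cup b}_a,(\omega')^{a\cup b}_a]$ when it enters with the opposite twist (when $0<a$ is the smaller index). Comparing with the diagonal part $\sum_{b}[\omega^{a\cup b}_a,(\omega')^{a\cup b}_a]$ of the right-hand side (using $\sigma(a,a\cup b)^2=1$), the homomorphism identity holds exactly when $\sigma(a,a\cup b)$ cancels the twist sign for every pair. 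Rewriting these per-slot requirements in the normalized form $0\le a<b$ by means of \eqref{eq:sigma-defined} collapses them to the single stated condition $\sigma(a,a\cup b)=1$ for $a=0$ and $\sigma(a,a\cup b)=-1$ for $a>0$; necessity is then seen by reading off these coefficients on any $S$ whose nuclei contain non-commuting operators, so the equivalence cannot be weakened.
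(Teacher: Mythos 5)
Your proof is correct, and its two pillars are the same as the paper's: for $3\leq k\leq \vav+1$ you invoke commutativity of the centroids (a consequence of full nondegeneracy), and for $k=2$ you extract the sign conditions from the opposite-ring twist in $\Nuc_{ab}(S)$, which is exactly the paper's computation. The genuine difference is your explicit decomposition of $[\Upsilon^2\omega,\Upsilon^2\omega']$ into diagonal and cross terms, with the cross terms killed by transporting the two operators out of the shared slot into distinct slots and then using full nondegeneracy. The paper's proof only verifies that each restriction $\Upsilon^2_A$ to a single summand $\Nuc_A(S)$ is a homomorphism; for the map on the whole direct sum this is necessary but not sufficient, since one also needs $[\Upsilon^2_A\omega^A,\Upsilon^2_{A'}(\omega')^{A'}]=0$ whenever $A\neq A'$ overlap in a slot---precisely your cross-term claim. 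So your argument supplies a step the paper leaves implicit, and it makes visible that full nondegeneracy is needed at $k=2$ (and at $k=3$, where the image inside $\Nuc(S)$ must be shown abelian), not only at $k\geq 4$. Two small corrections to wording: in the cross-term (and slot-$0$ diagonal) analysis, when the shared slot is $0$ the conclusion follows from fullness, $U_0=\langle t|U_{\comp{0}}\rangle$, rather than from nondegeneracy; and your parenthetical justification in the $k=3$ case (``a centroid operator restricted to one slot commutes with any operator supported on a different slot'') misstates the fact actually needed, which is that two operators occupying the \emph{same} slot commute because each can be transported to a \emph{distinct} other slot---this is what your own cross-term argument proves, so the appeal is sound once rephrased.
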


\begin{proof}
If $S$ is fully nondegenerate, then for $3\leq k\leq \vav+1$ and $A\in\binom{[\vav]}{k}$, 
$\Op{S}{P_A}|_A$ is a commutative ring, and so it has trivial Lie bracket and abelian unit group. 

We may therefore assume $k=2$. We give a proof for derivations (where $P=D$);
by Lemma~\ref{lem:unit-embedding} the proof for autotopisms (where $P=G$) is similar.
First, consider $A=\{0,b\}$. By Lemma~\ref{lem:well-defined}, $\sigma(0, A)=\sigma(b,A)$. 
Recalling that $\Nuc_{0b}(S)\subseteq\End(U_b)\times\End(U_0)$, the map 
$\Upsilon^2_A$ is a homomorphism if, and only if, $\sigma(0,A)=\sigma(0,A)^2$. 
Next, if $0<a<b$, then $\Nuc_{ab}(S)\subseteq\End(U_a)^{{\rm op}}\times \End(U_b)$. 
As the $a$-coordinate is contained in the opposite ring, 
$\Upsilon^2_A$ is a homomorphism if, and only if, $\sigma(a,A)=-\sigma(a,A)^2$.
\end{proof}

The notation for the calculations required to prove the exactness of these maps is simplified if, 
for $C\subset[\vav]$ of order at most $\vav-1$ and $a,b\in\comp{C}$, we set 
\begin{equation}\label{eq:tau}
\tau(C,a,b) = \sigma(C, C\cup a)\sigma(C\cup a, C\cup\{a,b\}) + \sigma(C, C\cup b)\sigma(C \cup b, C\cup\{a,b\}).
\end{equation}
The next result establishes the properties of $\sigma$ needed to ensure that 
the maps $\Upsilon^*$ form a chain complex. 

\begin{lem}
\label{lem:chain-complex}
Fix $2\leq k\leq \vav$. 
Then $\Upsilon^k\circ \Upsilon^{k+1}=\epsilon$ if, and only if, for all $C\in\binom{[\vav]}{k-1}$ 
and distinct $a,b\in\comp{C}$, $\tau(C,a,b)=0$.
\end{lem}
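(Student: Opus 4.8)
The plan is to prove the lemma by expanding the composite $\Upsilon^k\circ\Upsilon^{k+1}$ coordinate‑by‑coordinate and reading off $\tau$ as exactly the coefficient that controls its vanishing. Since we are assuming $\sigma$ satisfies~\eqref{eq:sigma-defined}, every $\Upsilon^j$ is well‑defined by Lemma~\ref{lem:well-defined}. I would give the detailed argument for derivations ($P=D$); the autotopism case ($P=G$) then follows by the same bookkeeping through Lemma~\ref{lem:unit-embedding}, exactly as in the proof of Lemma~\ref{lem:sigma-hom}. The advantage of the $P=D$ case is that all the objects in sight are abelian: for $3\leq j\leq\vav+1$ the summands $\Op{S}{C_A}|_A=\Cen_A(S)$ are commutative rings, and the terminal target $\Der(S)=\Op{S}{D}$ is an abelian group under addition, so the entire computation may be carried out additively with no commutativity scruples. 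The one boundary index is $k=2$, where the outer map $\Upsilon^2$ lands in $\Op{S}{P}$ rather than in a direct sum over singletons; there I would identify a singleton $C=\{c\}$ with its element $c\in[\vav]$ and check that the identical computation produces the $c$‑coordinate of the image in $\Omega$.

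Concretely, I would fix an input $\omega=\bigoplus_{A\in\binom{[\vav]}{k+1}}(\omega_a^A:a\in A)$, push it through $\Upsilon^{k+1}$ to obtain in the $B$‑summand (with $|B|=k$) the entries $\eta_b^B=\sum_{a\notin B}\sigma(B,B\cup a)\,\omega_b^{B\cup a}$, and then apply $\Upsilon^k$. For $C\in\binom{[\vav]}{k-1}$ and $c\in C$ this gives
\begin{align*}
(\Upsilon^k\circ\Upsilon^{k+1})(\omega)_c^C
 &= \sum_{b\notin C}\sigma(C,C\cup b)\,\eta_c^{C\cup b}\\
 &= \sum_{b\notin C}\ \sum_{a\notin C\cup b}\sigma(C,C\cup b)\,\sigma(C\cup b,C\cup\{a,b\})\,\omega_c^{C\cup\{a,b\}}.
\end{align*}
The index set here is the collection of ordered pairs $(b,a)$ of distinct elements of $\comp{C}$, and the component $\omega_c^{C\cup\{a,b\}}$ depends only on the unordered pair $\{a,b\}$. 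Regrouping by unordered pairs, the two orderings of $\{a,b\}$ contribute precisely the two summands of~\eqref{eq:tau}, so that
\begin{align*}
(\Upsilon^k\circ\Upsilon^{k+1})(\omega)_c^C = \sum_{\{a,b\}\subseteq\comp{C}}\tau(C,a,b)\,\omega_c^{C\cup\{a,b\}}.
\end{align*}
For $k=2$ the same manipulation with $C=\{c\}$ recovers the $c$‑coordinate of the image in $\Op{S}{P}$.

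With this formula in hand both implications are immediate. If $\tau(C,a,b)=0$ for every $C\in\binom{[\vav]}{k-1}$ and all distinct $a,b\in\comp{C}$, then each coordinate above vanishes and $\Upsilon^k\circ\Upsilon^{k+1}=\epsilon$. Conversely, to isolate a single $\tau(C,a,b)$ I would evaluate the composite on the input supported on the one index $A_0=C\cup\{a,b\}$, taking $\omega^{A_0}=(1_{U_a}:a\in A_0)\in\Cen_{A_0}(S)$ and all other $\omega^A=\epsilon$; the displayed formula then collapses to $\tau(C,a,b)\cdot 1_{U_c}$, whose vanishing forces $\tau(C,a,b)=0$ in $\K$ because $U_c\neq 0$. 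I expect the main obstacle to be purely the bookkeeping: justifying the interchange and the regrouping of the double sum so that exactly the two orderings of each pair $\{a,b\}$ assemble into~\eqref{eq:tau}, together with the care needed at the boundary index $k=2$ to match the singleton reindexing into $\Op{S}{P}$ and, in the $P=G$ case, to route the (not a priori abelian) target through Lemma~\ref{lem:unit-embedding} and the disjoint‑support commutativity already exploited in Lemma~\ref{lem:sigma-hom}.
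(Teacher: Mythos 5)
Your proposal is correct and follows essentially the same route as the paper's proof: both expand $\Upsilon^k\circ\Upsilon^{k+1}$ coordinatewise and regroup the resulting double sum over ordered pairs $(b,a)$ into unordered pairs $\{a,b\}$, identifying the coefficient of $\omega_c^{C\cup\{a,b\}}$ as exactly $\tau(C,a,b)$ from~\eqref{eq:tau}. The only difference is that you spell out the ``only if'' direction by evaluating on the input supported at $A_0=C\cup\{a,b\}$ with identity operators, a step the paper's computation leaves implicit.
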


\begin{proof}
For $2\leq k\leq \vav$, 
\begin{align*}
	 \Upsilon^k\circ \Upsilon^{k+1} & \left(\bigoplus_{A\in \binom{[\vav]}{k+1}} 
	 	(\omega_a^A: a\in A)\right) \\
	 & =
		\bigoplus_{C\in \binom{[\vav]}{k-1}}	\left(
		\sum_{b\not\in C} 
		\sigma(C,C\cup b)
		\sum_{a\not\in C\cup b} 
			\sigma(C\cup b,C\cup \{a,b\})
			 \omega_c^{C\cup \{a,b\}}
			: c\in C\right) \\
	  & =
		\bigoplus_{C\in \binom{[\vav]}{k-1}}	\left(
		\sum_{\{a,b\}\in \binom{[\vav]-C}{2}}
		\tau(C,a,b)
		 \omega_c^{C\cup\{a,b\}}
			: c\in C\right). \qedhere
\end{align*}
\end{proof}

We next show that the conditions in 
Lemma~\ref{lem:chain-complex}---which are clearly 
necessary for exactness of $\Upsilon^*$---are also 
sufficient. 

\begin{lemma}
\label{lem:exact}
Fix $2\leq k\leq \vav$.
If $\Upsilon^k\circ \Upsilon^{k+1}=\epsilon$, then $\ker(\Upsilon^k) = \im(\Upsilon^{k+1})$.
\end{lemma}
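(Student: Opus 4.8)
Throughout I treat the derivation complex ($P=D$, so the ambient identity is $\epsilon=0$); the autotopism case ($P=G$) follows verbatim after the twist of Lemma~\ref{lem:unit-embedding}. Since $\Upsilon^k\circ\Upsilon^{k+1}=\epsilon$ is exactly the inclusion $\im(\Upsilon^{k+1})\subseteq\ker(\Upsilon^k)$, the whole content is the reverse inclusion, and the plan is to construct an explicit preimage for every cycle $x=(\omega^A)_{|A|=k}\in\ker(\Upsilon^k)$. First I would strip away the coupling inside each centroid. Full nondegeneracy makes every single-coordinate projection $\Cen_A(S)\to\End(U_a)$, $\omega^A\mapsto\omega^A_a$, injective: if $\omega^A_a=0$ then $\langle t\mid\omega^A_b u_b,u_{\comp{b}}\rangle=\langle t\mid\omega^A_a u_a,u_{\comp{a}}\rangle=0$ for all $b\in A$ and all $u$, so nondegeneracy (and fullness, when $0\in A$) forces every $\omega^A_b=0$. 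Reading off only the $a$-entries and reindexing $A\mapsto A\setminus a$, the formula behind $\Upsilon^k$ (cf.\ \eqref{eq:tau} and Lemma~\ref{lem:chain-complex}) is exactly the $\sigma$-signed simplicial boundary of the full simplex on $[\vav]\setminus a$. Thus, coordinate by coordinate, the sequence is a coefficient-twisted simplicial chain complex of a simplex.

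Against this backdrop the argument divides into a combinatorial half and an operator-theoretic half. For the combinatorial half I would isolate the statement that the $\sigma$-signed boundary of a simplex is acyclic in the relevant range; this is the property studied in Section~\ref{sec:Rihanna}, and the constraints \eqref{eq:sigma-defined} together with the relation $\tau(C,a,b)=0$ are arranged precisely to guarantee it. Concretely I would use the cone contracting homotopy $h$ for a chosen vertex $p$: it adjoins $p$ to each index set with the sign supplied by $\sigma$, and the identity $\Upsilon^{k+1}h+h\Upsilon^k=\mathrm{id}$ collapses, term by term, to the same $\tau=0$ bookkeeping already verified. Feeding a cycle $x$ through this forces, for every $A'\ni p$ and every $a\in A'\setminus p$, the value $\eta^{A'}_a=\sigma(A'\setminus p,A')\,\omega^{A'\setminus p}_a$, with $\eta^{A'}=0$ whenever $p\notin A'$; the cross-coordinate consistency of these choices then reduces, after applying $\langle t\mid\,\cdot\,\rangle$, to the coordinate-wise cycle relations satisfied by $x$.

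The operator-theoretic half is where I expect the real obstacle. I must check that this prescription lands in $\bigoplus_{A'}\Cen_{A'}(S)$ and not merely in $\bigoplus_{A'}\prod_{a\in A'}\End(U_a)$: the entries $\eta^{A'}_a$ with $a\neq p$ are a rescaling of the centroid element $\omega^{A'\setminus p}$ and so are automatically compatible across $A'\setminus p$, but it is not automatic that their common centroid action extends to the remaining coordinate $p$, i.e.\ that some $\eta^{A'}_p\in\End(U_p)$ completes $\eta^{A'}$ to an element of $\Cen_{A'}(S)$ as defined in \eqref{eq:cents}. Since the restriction maps $\Cen_{A'}(S)\to\Cen_{A'\setminus p}(S)$ are injective but in general not surjective, this is a genuine descent (gluing) problem, and it is exactly here that full nondegeneracy must do essential work: I would use injectivity of the single-coordinate projections together with fullness of $S$ to realize the required action on $U_p$, and then verify the centroid identities of \eqref{eq:cents} for the completed tuple using the cycle relations. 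Establishing this compatibility---that the cone of a cycle is a tuple of honest centroid elements---is the crux; granting it, $x=\Upsilon^{k+1}(h(x))$ yields $\ker(\Upsilon^k)\subseteq\im(\Upsilon^{k+1})$ and hence the lemma.
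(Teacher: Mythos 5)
Your combinatorial half is sound, and it is genuinely parallel to what the paper does: splitting the data one frame coordinate at a time and recognizing each $\Upsilon^k$ as a $\sigma$-signed boundary map of a simplex is exactly the paper's reorganization, and your contracting-homotopy identity $\Upsilon^{k+1}h+h\Upsilon^k=\mathrm{id}$ does reduce to the relations $\tau(C,a,b)=0$ of \eqref{eq:tau}. It is the homotopy-operator counterpart of the paper's method, which instead writes each coordinate map $\Upsilon^{k+1}_a$ as a signed incidence matrix, uses the block identity $Y_{k+1}=-Z_{k+1}X_kZ_k$ (again equivalent to $\tau=0$), and reads exactness off a normal form obtained by unimodular row and column operations; the paper's pivot $\min([\vav]-a)$ plays the role of your cone vertex. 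The problem is that your proof stops exactly where the lemma's content begins. You assert, but never prove, that the coned cycle can be completed to an element of $\bigoplus_{A'}\Cen_{A'}(S)$: the missing coordinate $\eta^{A'}_p\in\End(U_p)$ is never constructed, and ``granting it'' concedes the central claim. You have localized the difficulty correctly---once each $\eta^{A'}$ is known to be a genuine centroid (or nucleus) element, the $p$-coordinate boundary conditions do follow from injectivity of the single-coordinate projections---but localizing a gap is not closing it.

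This gap is genuine and not a routine verification, for two reasons. First, the manipulation you gesture at does not close up: to build $\eta^{A'}_p$ one would transport $\omega^{A'\setminus p}$ through a neighboring component $\omega^{A'\setminus c}$ ($c\in A'\setminus p$) using the cycle relation at $C=A'\setminus\{c,p\}$; but for $k<\vav$ that relation carries extra terms $\omega^{C\cup d}_e$ with $d\notin A'$, so the candidate values disagree by terms that need not vanish. Only in the top case $k=\vav$ (for instance $\vav=2$) is the relation clean, which is presumably the case that makes the prescription look inevitable. Second, your claim is strictly stronger than the exactness being proved: you fix the non-$p$ coordinates of the preimage in advance and then need that particular partial tuple to extend, which does not follow from the existence of \emph{some} preimage, precisely because the restrictions $\Cen_{A'}(S)\to\Cen_{A'\setminus p}(S)$ need not be surjective, as you yourself observe. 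Note also that fullness produces operators only on $U_0$, so for a cone vertex $p\neq 0$ the tools you name (injectivity plus fullness) cannot even begin to define $\eta^{A'}_p$. The paper's proof never poses this extension problem: it stays coordinate-wise from start to finish and extracts preimages symbolically from the echelonized matrices. Whether the paper's own write-up fully spells out the reassembly of those coordinate-wise preimages into centroid tuples is a separate question; the point is that your route concentrates the entire burden of Lemma~\ref{lem:exact} into the extension statement and then omits its proof.
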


\begin{proof}
We will express $\Upsilon^k$ as a matrix with entries in $\{-1,0,1\}$.
The nonzero entries are determined by the function $\sigma$. 
Our approach is then to derive a suitable transition matrix $M_k$ 
and study instead $\Upsilon^k M_{k+1} M_{k+1}^{-1}\Upsilon^{k+1}$. 
First, we arrange the components of $\bigoplus_{A\in\binom{[\vav]}{k+1}}\Op{S}{C_A}|_A$ so that 
the first summand to be over $[k]$, and thus collect together those operators that
act on the same term in the frame:
\begin{align*}
	\bigoplus_{A\in \binom{[\vav]}{k+1}}(\omega_a^A:a\in A) 
		\mapsto \bigoplus_{a\in [\vav]} \left(\omega_a^{B\cup a} 
			: B\in \binom{[\vav]-a}{k}\right).
\end{align*}
Reordering $\Upsilon^2$ into these coordinates, we rewrite $\Upsilon^{k+1}$ 
in terms of its restrictions $\Upsilon^{k+1}_a$ to each $\End(U_a)$, as follows:
\begin{align*}
	\Upsilon^{k+1}\left(	\bigoplus_{A\in \binom{[\vav]}{k+1}}(\omega_a^A:a\in A) \right)
		& = \bigoplus_{a\in [\vav]} 
			\Upsilon_a^{k+1}\left(\omega_a^{B\cup a} 
			: B\in \binom{[\vav]-a}{k}\right).
\end{align*}

Secondly, we use a matrix to describe each $\Upsilon_a^{k+1}$ as it
naturally leads to the concept of echelonizing, which will identify
the image of each map.  Rather than work with a block diagonal matrix
$\Upsilon^{k+1}=\bigoplus_a \Upsilon_a^{k+1}$, 
we instead fix $a\in [\vav]$ and focus on the matrix for just that fixed coordinate.  
Thus, for each $k$ we define a $\{-1,0,1\}$-valued matrix $M_{k+1}$ 
whose rows range over $\binom{[\vav]-a}{k}$
and whose columns range over $\binom{[\vav]-a}{k-1}$. 
Fixing a total order (e.g.\ lexicographic) on the subsets of $[\vav]-a$, define
\begin{align*}
	\left[M_{k+1}^{(a)} \right]_{AB} & = \left\{
		\begin{array}{cc}
			\sigma(B,A) & B\subset A, \\
			0 & B\not\subset A.
		\end{array}\right.
\end{align*}
For $b=\min([\vav]-a)$, we
observe the following partition of $M_{k+1}^{(a)}$:
\begin{align*}
M_{k+1}^{(a)} & = 
\begin{array}{cc}
	& \begin{array}{cc} b\in B  & b\not\in B \end{array} \\
\begin{array}{c} b\in A \\ b\notin A \end{array} & 
\left[\begin{array}{c|c}	
Y_{k+1} & Z_{k+1} \\ \hline
0 & X_{k+1} \end{array} \right]
\end{array},
& 
Z_{k+1} & = \bigoplus_{a\in A - B} \sigma(B,B\cup a).
\end{align*}
The block of $0$'s in the lower left follows from the fact that those rows and
columns are indexed by subsets $A$ and $B$ for which $B\not\subset A$.
Similarly in the upper right block $B\subset A$ at exactly the
row $A=B\cup a$ and column $B$, for our fixed $a$.  As 
$\sigma(B,B\cup a)\in \{-1,1\}$, it follows that $Z_{k+1}$ is diagonal, invertible,
and of order at most $2$.  Letting $Z_1=1$ for the base case, we
now claim that for each $k$,
\begin{align}
\label{eq:claim}
	Y_{k+1} & = -Z_{k+1} X_k Z_{k}.
\end{align}
Indeed, as $Z_{k+1}$ is an involution we can rewrite this identity 
equivalently as
\begin{align}
\label{eq:key}
	Z_{k+1}X_k+Y_{k+1}Z_k & = 0.
\end{align}
Focusing just on non-zero coordinates, condition (\ref{eq:key}) translates precisely
to the condition $\tau(C,c,d)=0$, as in in~\eqref{eq:tau}. 
Thus, by Lemma~\ref{lem:chain-complex}, equation~\eqref{eq:claim} holds. 

Having identified the structure of the matrices, 
for each $a\in[\vav]$ we can echelonize the coordinates of the transform
$\Upsilon^{k+1}_a$ to identify the symbolic rank of each matrix. This
will ensure that the image equals the kernel. To simplify notation for inverses,
we put $N_k=M_k^{(a)}$, and compute
\begin{align*}
N_2^{-1} \Upsilon_a^2 
	& = 
	\begin{bmatrix} Z_2 & 0 \\ -X_2Z_2 & I \end{bmatrix}
	\begin{bmatrix} Z_2 \\ X_2 \\ \end{bmatrix}
	& = \begin{bmatrix} 1\\ 0 \end{bmatrix}\\
	\vdots & & \vdots \\
N_{k+1}^{-1}\Upsilon^{k+1}_a N_k & =
\begin{bmatrix} Z_{k+1} & 0 \\  -X_{k+1}Z_{k+1} & I \end{bmatrix}
\begin{bmatrix}
	-Z_{k+1} X_k Z_k & Z_{k+1}\\
	0 & X_{k+1}
\end{bmatrix}
\begin{bmatrix} Z_k & 0 \\ X_k & I \end{bmatrix}
& =
\begin{bmatrix}
	0 & I\\
	0 & 0
\end{bmatrix}\\
\vdots & & \vdots \\
\Upsilon^{\vav+1}_a N_{\vav} & = 
\begin{bmatrix} Z_{\vav} X_{\vav-1} Z_{\vav-1} & Z_{\vav} \end{bmatrix}
	\begin{bmatrix}
	Z_{\vav} & 0 \\ Z_{\vav}X_{\vav-1}Z_{\vav-1} & Z_{\vav} 
	\end{bmatrix}
	&	= \begin{bmatrix} 0 & 1 \end{bmatrix}
\end{align*}
Evidently the image (column span) of each matrix on the right is the kernel 
(right null space) of the one below it. Since each row-column operation is
carried out by unimodular transform (a permutation or transvection with $\pm 1$-values) 
it is possible to perform these operations symbolically on our coordinates.
Thus, elements in the kernel of $\Upsilon_a^{k}$ can be used to
write elements in the image of $\Upsilon_a^{k+1}$.  
\end{proof}

\subsection{Conditions on \boldmath$\sigma$}
\label{sec:conditions}
To recap, three properties are required 
of $\sigma$ that ensure the homomorphisms $\Upsilon^*$ 
form an exact sequence. First, by Lemma~\ref{lem:well-defined}, 
we require 
\[ \sigma(a, a\cup b) = \left\{ \begin{array}{ll} 
\sigma(b, a\cup b) & 0=a<b\leq \vav,\\
-\sigma(b, a\cup b) & 0<a<b\leq \vav,
\end{array}\right. \]
so that the maps are well-defined. Secondly, from Lemma~\ref{lem:sigma-hom} we require
\[ \sigma(a, a\cup b) = \left\{ \begin{array}{ll} 
1 & 0=a<b\leq \vav,\\
-1 & 0<a<b\leq \vav,
\end{array}\right. \]
to ensure the homomorphism property. 
Finally, by Lemmas~\ref{lem:chain-complex} and~\ref{lem:exact} 
we require that for all $C\subset [\vav]$ with order at most $\vav-1$, 
and for all distinct $a,b\notin C$, 
\[ 
\sigma(C, C\cup a)\sigma(C\cup a, C\cup\{a,b\}) + \sigma(C, C\cup b)\sigma(C \cup b, C\cup\{a,b\}) = 0. 
\]

\section{Directed graphs}
\label{sec:Rihanna}
The goal of this section is to prove the existence of a 
function $\sigma$ satisfying the conditions laid out in 
Section~\ref{sec:conditions}.
This is accomplished by means of a directed graph  
$\mathcal{G}_{\vav}$ whose vertices are subsets of $[\vav]$.
Two vertices $A,B\subseteq [\vav]$ are adjacent if there exists $b\notin A$ such that $A\cup b=B$. 
Our objective is to define an {\em orientation} on $\mathcal{G}_{\vav}$, namely to assign
a direction to each edge in $\mathcal{G}_{\vav}$ that encodes the nonzero 
values of $\sigma$. A directed edge $C\cup a\to C$ can be thought of as ``down" in the underlying poset
and carries a value of 1, while $C\to C\cup a$ is ``up" and carries the value $-1$. 

To state the key result, we introduce some convenient terminology. 
We refer to an edge from $A\subseteq [\vav-1]$ to $A\cup\vav$
as a {\em controlling edge}.
For distinct $a,b\in[\vav]$ and $C\subseteq[\vav]- \{a,b\}$, denote by $\mathcal{D}(C, a, b)$ the subgraph of 
$\mathcal{G}_{\vav}$ induced on the four vertices labeled by $C$, $C\cup a$, $C\cup b$, and $C\cup\{a,b\}$. 
We refer to subgraphs $\mathcal{D}(C,a,b)$ as \emph{diamonds} of $\mathcal{G}_{\vav}$.
We say an orientation on $\mathcal{G}_{\vav}$ is \emph{oddly acyclic} if every diamond of 
$\mathcal{G}_{\vav}$ is acyclic with a path of length 3. 
For consistency, we say that every orientation on $\mathcal{G}_0$ is (vacuously) oddly acyclic.

Our first result collects the diamonds of $\mathcal{G}_{\vav}$ into three classes.
For $A\subseteq[\vav]$, we say a diamond $\mathcal{D}$ is contained in $2^{A}$, the power set of $A$, if the vertex labels of 
$\mathcal{D}$ are contained in $A$. 

\begin{lem}
\label{lem:diamonds}
For $\vav\geq 1$, every diamond of $\mathcal{G}_{\vav}$ is either contained in $2^{[\vav-1]}$, contained in 
$2^{[\vav]} - 2^{[\vav-1]}$, or contains exactly two controlling edges.
This partitions the set of diamonds of $\mathcal{G}_{\vav}$. 
\end{lem}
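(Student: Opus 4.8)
The plan is to observe that the entire statement reduces to tracking the position of the top element $\vav$ within the data $(C,a,b)$ that defines a diamond $\mathcal{D}(C,a,b)$, and that the trichotomy in the lemma corresponds exactly to the three mutually exclusive possibilities for that position. Since $C\subseteq[\vav]-\{a,b\}$, the element $\vav$ either lies in $C$, or lies in $\{a,b\}$, or lies in neither; these three cases are exhaustive and pairwise disjoint. Matching each case to one of the three classes will give the partition.

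First I would recall the combinatorial structure of a diamond. The four vertices are $C$, $C\cup a$, $C\cup b$, $C\cup\{a,b\}$, and the four edges are $C\to C\cup a$, $C\to C\cup b$, $C\cup a\to C\cup\{a,b\}$, and $C\cup b\to C\cup\{a,b\}$; each edge adds exactly one of $a$ or $b$. The key observation to isolate up front is that an edge is controlling precisely when the element it adds is $\vav$, since in that case its source automatically lies in $[\vav-1]$.

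Then I would run the case analysis. If $\vav\in C$, every vertex contains $\vav$, so the diamond is contained in $2^{[\vav]}-2^{[\vav-1]}$. If $\vav\notin C$ and $\vav\notin\{a,b\}$, then $\vav$ appears in no vertex and the diamond is contained in $2^{[\vav-1]}$. In the remaining case $\vav\notin C$ but $\vav\in\{a,b\}$; using the symmetry of $\mathcal{D}(C,a,b)$ in $a$ and $b$, I may assume $\vav=b$. Here the two edges that add $b=\vav$, namely $C\to C\cup\vav$ and $C\cup a\to(C\cup a)\cup\vav$, are controlling because their sources $C$ and $C\cup a$ lie in $[\vav-1]$, while the two edges adding $a\neq\vav$ are not; hence the diamond contains exactly two controlling edges.

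Finally I would verify that the three target classes are themselves disjoint, so that the trichotomy is a genuine \emph{partition}: a diamond contained in $2^{[\vav-1]}$ has no vertex containing $\vav$ and hence no controlling edge; a diamond contained in $2^{[\vav]}-2^{[\vav-1]}$ has every vertex containing $\vav$, so no edge adds $\vav$ and again there is no controlling edge; and any diamond with a controlling edge has both a vertex containing $\vav$ and a source vertex not containing $\vav$, so it lies in neither of the first two classes. I expect no serious obstacle here; the only points demanding care are confirming the equivalence ``adds $\vav$'' $\Leftrightarrow$ ``controlling'' and the bookkeeping that exactly two of the four edges qualify in the mixed case, while the disjointness check is what upgrades the case split into the asserted partition.
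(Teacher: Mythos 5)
Your proposal is correct and takes essentially the same approach as the paper: both arguments rest on the observation that an edge is controlling exactly when the element it adjoins is $\vav$, and both settle the trichotomy according to whether $\vav$ lies in every vertex, in no vertex, or in exactly two vertices of the diamond. Your explicit case analysis on the position of $\vav$ within $(C,a,b)$ simply spells out the two-and-two vertex split that the paper deduces more tersely from closure properties of $2^{[\vav-1]}$ and $2^{[\vav]}-2^{[\vav-1]}$.
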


\begin{proof}
No diamond in $2^{[\vav-1]}$ or $2^{[\vav]}-2^{[\vav-1]}$ contains a controlling edge, so the three sets are disjoint. 
Suppose $\mathcal{D}$ is a diamond not contained in $2^{[\vav-1]}$ or $2^{[\vav]}-2^{[\vav-1]}$.
Because $2^{[\vav]}$ and $2^{[\vav]}-2^{[\vav-1]}$ are subset-closed, it follows that $\mathcal{D}$ has two vertices 
with labels in $2^{[\vav]}$ and two vertices with labels in $2^{[\vav]}-2^{[\vav-1]}$.
\end{proof}

We dedicate the key result in this section to the 
inspiration for its proof:

\begin{lem}[Rihanna's Lemma]
\label{lem:RiRi}
Suppose $\vav\geq 1$.
For every oddly acyclic orientation on the subgraph $\mathcal{G}_{\vav-1}$ of $\mathcal{G}_{\vav}$, 
and for every orientation on the controlling edges of $\mathcal{G}_{\vav}$, there exists a unique induced 
oddly acyclic orientation for $\mathcal{G}_{\vav}$.
\end{lem}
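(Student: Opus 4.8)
The plan is to convert the graph-theoretic notion of odd acyclicity into a multiplicative sign condition, and then use the partition of diamonds from Lemma~\ref{lem:diamonds} to reduce the ``top'' of $\mathcal{G}_{\vav}$ to its ``bottom'' $\mathcal{G}_{\vav-1}$ through the controlling edges.

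First I would record the key reformulation. Encoding an edge orientation by the value $\sigma$ it carries ($+1$ for a ``down'' edge $C\cup a\to C$, and $-1$ for an ``up'' edge), a diamond $\mathcal{D}(C,a,b)$ is oddly acyclic if and only if the product of its four edge-values equals $-1$. This is a finite check over the sixteen orientations of a $4$-cycle; equivalently, it is exactly the condition $\tau(C,a,b)=0$ from~\eqref{eq:tau}, since $\tau(C,a,b)=\sigma_1\sigma_2+\sigma_3\sigma_4$ with each summand in $\{\pm1\}$ vanishes precisely when $\sigma_1\sigma_2\sigma_3\sigma_4=-1$. With this equivalence in hand, an oddly acyclic orientation of $\mathcal{G}_{\vav}$ is the same datum as a sign function $\sigma$ whose diamond-products are all $-1$.

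Next, using Lemma~\ref{lem:diamonds} I would split the edges of $\mathcal{G}_{\vav}$ into \emph{bottom} edges (inside $2^{[\vav-1]}$, fixed by the given orientation of $\mathcal{G}_{\vav-1}$), \emph{controlling} edges (fixed by hypothesis), and \emph{top} edges (inside $2^{[\vav]}-2^{[\vav-1]}$, yet to be determined); correspondingly every diamond is bottom, top, or mixed. Each top edge $\{C\cup\vav,\,C\cup\{a,\vav\}\}$ lies in a \emph{unique} mixed diamond $\mathcal{D}(C,a,\vav)$, recovered by deleting $\vav$ from its endpoints, and the other three edges of that diamond (one bottom and two controlling) are already oriented. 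The product condition then forces the value on the top edge uniquely, namely as the negative of the product of the three fixed values. This simultaneously yields uniqueness (any oddly acyclic extension must satisfy every mixed-diamond constraint, hence agrees on all top edges) and produces a candidate orientation for which every mixed diamond is oddly acyclic by construction.

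The main step is to verify that this forced assignment also makes every \emph{top} diamond oddly acyclic; this is where the hypothesis on $\mathcal{G}_{\vav-1}$ is consumed. A top diamond $\mathcal{D}(C\cup\vav,a,b)$ sits above the bottom diamond $\mathcal{D}(C,a,b)$, and its four top edges are forced by the four mixed diamonds $\mathcal{D}(C,a,\vav)$, $\mathcal{D}(C\cup a,b,\vav)$, $\mathcal{D}(C,b,\vav)$, and $\mathcal{D}(C\cup b,a,\vav)$. Writing $k(A)=\sigma(A,A\cup\vav)$ for the controlling values and $b_1,\dots,b_4$ for the four bottom-diamond values, each forced top value equals $-b_i$ times two of the $k(\cdot)$. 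Multiplying the four forced values together, the four forcing signs contribute $(-1)^4=+1$, while each of $k(C)$, $k(C\cup a)$, $k(C\cup b)$, $k(C\cup\{a,b\})$ occurs exactly twice and so squares to $1$; what survives is precisely $b_1b_2b_3b_4$, the product over $\mathcal{D}(C,a,b)$. Since $\mathcal{G}_{\vav-1}$ is oddly acyclic, that product is $-1$, so the top diamond is oddly acyclic too. Combining the three classes---bottom diamonds oddly acyclic by hypothesis, mixed by construction, top by this cancellation---establishes existence, and the forcing argument gives uniqueness; the base case $\vav=1$ is immediate, as $\mathcal{G}_1$ has a single (mixed) diamond and no top diamonds. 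I expect the crux to be exactly this cancellation bookkeeping: one must correctly match each of the four top edges to the mixed diamond that forces it and to its corresponding bottom edge, and confirm that every controlling-edge sign appears an even number of times so that only the bottom-diamond product remains.
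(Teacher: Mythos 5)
Your proposal is correct and follows essentially the same route as the paper's own proof: the same three-way classification of diamonds (Lemma~\ref{lem:diamonds}), the same forcing of each top (``yellow'') edge through the unique controlling diamond containing it, and the same propagation of odd acyclicity from the bottom (``green'') diamond to the top one via the four shared controlling edges. Your multiplicative reformulation---oddly acyclic $\Leftrightarrow$ the product of the four edge signs is $-1$---simply makes explicit, as a cancellation of squared controlling-edge signs, the parity argument the paper states in terms of edge directions.
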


\begin{proof}
Think of
diamonds contained in $2^{[\vav-1]}$ as \emph{green diamonds}, and of those contained in 
$2^{[\vav]} - 2^{[\vav-1]}$ as \emph{yellow diamonds}.
Call a diamond \emph{controlling} if it contains two controlling edges. 
Observe that every non-controlling edge of $\mathcal{G}_{\vav}$ lies in a unique controlling diamond.

We construct the induced orientation 
from the orientations on $\mathcal{G}_{\vav-1}$ 
and the controlling edges of $\mathcal{G}_{\vav}$ as follows. 
For every $a\in[\vav-1]$ and $C\subseteq [\vav-1]-a$,
the (yellow) edge $y$ incident to $C\cup\{a,\vav\}$ and $C\cup\{\vav\}$ lies in a unique controlling diamond 
$\mathcal{D}(C,a,\vav)$. As a scholium to 
Lemma~\ref{lem:diamonds}, we observe that the orientation of exactly 
three edges of $\mathcal{D}(C,a,\vav)$ is determined by the oddly acyclic orientation on 
$\mathcal{G}_{\vav-1}$ and the orientation on the controlling edges of $\mathcal{G}_{\vav}$. 
Thus, there is a unique choice of orientation of $y$ such that $\mathcal{D}(C,a,\vav)$ is oddly acyclic.
Impose this orientation upon all yellow edges, 
and consider the resulting directed graph $\mathcal{G}_{\vav}$.

By Lemma~\ref{lem:diamonds} we must show that every yellow diamond is oddly acyclic.
Such a diamond has the form $\mathcal{D} = \mathcal{D}(C,a,b)$, where $\{\vav\} \subseteq 
C\subseteq[\vav]$ and $a,b\in [\vav-1]$ with $a\ne b$. 
Let $\mathcal{D}'=\mathcal{D}(C-\vav,a,b)$, 
a green diamond associated to $\mathcal{D}$.
There are exactly four controlling edges incident to both $\mathcal{D}$ and $\mathcal{D}'$. 
Two controlling edges incident to the same edges, $y$ in $\mathcal{D}$ and $g$ in 
$\mathcal{D}'$, point in the same direction if, and only if, the edges $y$ and $g$ point in opposite directions. 
This implies that $\mathcal{D}$ is oddly acyclic, 
since by hypothesis $\mathcal{D}'$ is oddly acyclic. 
The result now follows.
\end{proof}

\nameref{lem:RiRi} is illustrated in Figure~\ref{fig:RiRi} for $\vav=2$. 
The orientations for the green diamond and the controlling edges have been given, and 
by~\nameref{lem:RiRi} there is a unique choice of orientation for the yellow diamond so that $\mathcal{G}_2$ is oddly acyclic.

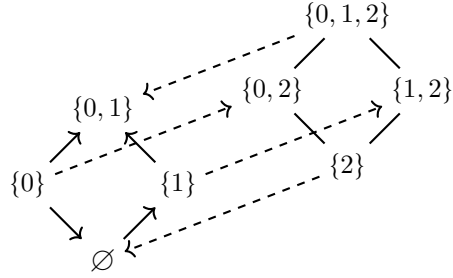
\begin{figure}[h]
\centering
\begin{tikzpicture}
    \node(E) at (0,0) {$\emptyset$};
    \node(0) at (-1,1) {$\{0\}$};
    \node(1) at (1,1) {$\{1\}$};
    \node(01) at (0,2) {$\{0,1\}$};
    %
    \node(2) at (3.25,1.25) {$\{2\}$};
    \node(02) at (2.25,2.25) {$\{0,2\}$};
    \node(12) at (4.25,2.25) {$\{1,2\}$};
    \node(012) at (3.25,3.25) {$\{0,1,2\}$};
    %
    \draw[->, thick] (0) -- (E);
    \draw[->, thick] (E) -- (1);
    \draw[->, thick] (1) -- (01);
    \draw[->, thick] (0) -- (01);
    %
    \draw[-, thick] (2) -- (02);
    \draw[-, thick] (2) -- (12);
    \draw[-, thick] (12) -- (012);
    \draw[-, thick] (02) -- (012);    
    %
    \draw[->, dashed, thick] (012) -- (01);
    \draw[->, dashed, thick] (0) -- (02);
    \draw[->, dashed, thick] (2) -- (E);
    \draw[->, dashed, thick] (1) -- (12);
\end{tikzpicture}
\caption{We illustrate $\mathcal{G}_2$, with the ``green'' diamond on the lower left and ``yellow'' diamond on the upper right. The controlling edges (dashed) connecting the two colored diamonds.} 
\label{fig:RiRi}
\end{figure}

The final result of this section establishes the 
existence of a suitable function $\sigma$ to use in our exact sequences.
First some additional terminology:
an orientation on $\mathcal{G}_{\vav}$ is \emph{positively swapped} if the subgraph induced on the vertices labeled 
$\{a,b\}$, $\{a\}$, and $\{b\}$ is a path of length 2 if, and only if, $0<a<b\leq \vav$. 

\begin{lem}
\label{lem:good-graph}
For every $\vav\geq 1$, there exists an oddly acyclic and positively 
swapped orientation for $\mathcal{G}_{\vav}$ such that for every $A\in\binom{[\vav]}{2}$, there is a directed edge from $A$ to $\max(A)$. 
\end{lem}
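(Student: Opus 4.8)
The plan is to construct the desired orientation by induction on $\vav$, using \nameref{lem:RiRi} as the engine that builds $\mathcal{G}_{\vav}$ from $\mathcal{G}_{\vav-1}$. The base case $\vav=1$ is handled directly: here $\mathcal{G}_1$ has vertices $\emptyset, \{0\}, \{1\}, \{0,1\}$, and I would simply orient the single diamond $\mathcal{D}(\emptyset,0,1)$ as a length-3 path. The positively swapped condition is vacuous when there is no pair $0<a<b$, and the remaining requirement---a directed edge from each $A\in\binom{[1]}{2}$, i.e.\ from $\{0,1\}$, to $\max(A)=1$---is a single constraint that I can satisfy while choosing the path, since in a length-3 path through four vertices of a diamond I have freedom to place the edge $\{0,1\}\to\{1\}$ appropriately.

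For the inductive step, suppose $\mathcal{G}_{\vav-1}$ carries an oddly acyclic, positively swapped orientation with a directed edge from each size-2 subset $A\subseteq[\vav-1]$ to $\max(A)$. I must specify an orientation on the controlling edges of $\mathcal{G}_{\vav}$---those from $B\subseteq[\vav-1]$ to $B\cup\vav$---and then invoke \nameref{lem:RiRi} to extend uniquely to an oddly acyclic orientation on all of $\mathcal{G}_{\vav}$. The key point is that the controlling edges are exactly where the three new required properties can be dialed in: I would orient the controlling edge incident to $\{a\}$ and $\{a,\vav\}$ so as to force the ``positively swapped'' behavior on the new yellow pairs $\{a,\vav\}$, and likewise arrange the edge from $\{a,\vav\}$ to $\max\{a,\vav\}=\vav$ to run in the required direction. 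Concretely, the new size-2 subsets are precisely those containing $\vav$, so the max-edge condition for them reduces to statements about yellow edges, whose orientations are determined by the controlling-edge choices through the unique extension in \nameref{lem:RiRi}.

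The main obstacle---and the heart of the argument---is verifying that a single choice of controlling-edge orientations can simultaneously satisfy all three demands without conflict, given that \nameref{lem:RiRi} removes my freedom over the yellow edges once the controlling edges are fixed. I expect the resolution to hinge on the observation, already extracted in the proof of \nameref{lem:RiRi}, that the orientation of a yellow edge $y$ is the unique one making its controlling diamond $\mathcal{D}(C,a,\vav)$ oddly acyclic, and that this orientation is explicitly computable from the green edge $g$ below it and the two flanking controlling edges via the parity relation ``two same-direction controlling edges $\Leftrightarrow$ $y,g$ opposite.'' I would therefore translate each of the two yellow requirements (positively swapped on pairs $\{a,\vav\}$ with $0<a$, and a directed edge toward $\vav$) into a parity condition on the relevant controlling edges, check that these conditions involve disjoint controlling edges or are mutually consistent where they overlap, and conclude that a valid assignment exists. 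The inductive hypotheses on $\mathcal{G}_{\vav-1}$ (positively swapped and the max-edge property for green pairs) feed directly into these parity computations, so the verification should close cleanly once the bookkeeping is set up.

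Finally, I would confirm that the properties I did not actively impose on the yellow edges are inherited automatically: oddly acyclicity of yellow diamonds is exactly what \nameref{lem:RiRi} guarantees, the green diamonds retain their inductively assumed orientation, and the positively swapped and max-edge conditions on the \emph{old} pairs (those avoiding $\vav$) are untouched by the extension. Thus the three conditions of Section~\ref{sec:conditions} hold for the resulting $\sigma$, completing the induction.
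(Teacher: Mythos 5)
Your overall strategy is the same as the paper's: induct on $\vav$, use \nameref{lem:RiRi} to extend an oddly acyclic orientation of $\mathcal{G}_{\vav-1}$ once the controlling edges are chosen, and use the controlling edges as the dials that force the new conditions (you correctly note, as the paper only implicitly does, that the edge between $\{a,\vav\}$ and $\{\vav\}$ is a yellow edge, steerable only indirectly through controlling-edge choices). But there is a genuine gap: you have misread \emph{positively swapped}. The definition is an ``if and only if,'' so for a pair $\{0,b\}$ it demands that the subgraph on $\{0\},\{b\},\{0,b\}$ is \emph{not} a path of length 2. The condition is therefore not vacuous when $\vav=1$, and your base case can fail: among the four oddly acyclic orientations of $\mathcal{D}(\emptyset,0,1)$ containing the edge $\{0,1\}\to\{1\}$, only two are positively swapped. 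For instance, the orientation $\{0\}\to\{0,1\}$, $\{0,1\}\to\{1\}$, $\{1\}\to\emptyset$, $\{0\}\to\emptyset$ is oddly acyclic with the required max-edge, yet its top three vertices form the directed path $\{0\}\to\{0,1\}\to\{1\}$, exactly what is forbidden when $a=0$. The same misreading recurs in your inductive step, where you translate into parity conditions only ``positively swapped on pairs $\{a,\vav\}$ with $0<a$,'' leaving the required non-path condition on $\{0\},\{\vav\},\{0,\vav\}$ unimposed and unchecked. (For $\vav\geq 2$ it happens to follow from your other constraints, but you neither assert nor prove this, and in the base case, where there are no pairs with $0<a$, nothing rescues it.)

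Beyond that, the consistency check you defer as ``bookkeeping'' is the actual content of the lemma. Each yellow edge joining $\{\vav\}$ and $\{a,\vav\}$ lies in the controlling diamond $\mathcal{D}(\emptyset,a,\vav)$, whose other three edges are the green edge joining $\emptyset$ and $\{a\}$ and the two controlling edges joining $\emptyset$ to $\{\vav\}$ and $\{a\}$ to $\{a,\vav\}$. Hence your parity conditions, one for each $a\in[\vav-1]$, all involve the \emph{same} controlling edge, the one from $\emptyset$ to $\{\vav\}$; they overlap rather than being disjoint. Their mutual consistency amounts to the fact that the green edges at $\emptyset$ other than the one to $\{0\}$ are all oriented coherently (equivalently, each subgraph on $\{0\},\{a\},\emptyset$ with $0<a<\vav$ is a path of length 2), and this must itself be extracted from the inductive hypothesis via odd acyclicity of the green diamonds $\mathcal{D}(\emptyset,0,a)$. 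That extraction is precisely the pivot of the paper's proof: it orients the controlling edge at $\{\vav\}$ so that $\{0\},\{\vav\},\emptyset$ is a path of length 2, which alone guarantees the positively swapped condition for all new pairs regardless of the remaining controlling edges, and then spends the controlling edges from $\{a\}$ to $\{a,\vav\}$, one per yellow constraint and hence without conflict, to point each yellow edge from $\{a,\vav\}$ to $\{\vav\}$. Until you identify the shared controlling edge and prove this inductive fact, the verification you expect to ``close cleanly'' is the proof itself.
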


\begin{proof}
We prove this by induction. 
From \nameref{lem:RiRi}, there are 8 oddly acyclic orientations for $\mathcal{G}_1$, exactly 4 are of those positively swapped. 
Of the 4 orientations, exactly 2 have a directed edge from $\{0,1\}$ to $\{1\}$. 

Now suppose the subgraph $\mathcal{G}_{\vav-1}$ is oddly acyclic and positively swapped such that for every $A\in\binom{[\vav-1]}{2}$, there is a directed edge from $A$ to $\max(A)$.  
The only choice we impose to get positively swapped is that the controlling edge between $\{\vav\}$ and $\emptyset$ points in a direction such that the subgraph on $\{0\}$, $\{\vav\}$, and $\emptyset$ is a path of length 2. 
By induction, the subgraph on $\{0\}$, $\{a\}$, and $\emptyset$ is a path of length 2, for all $0<a<\vav$. 
Therefore, regardless of the choice of orientation for the remaining controlling edges, the resulting orientation from \nameref{lem:RiRi} will be positively swapped. 
To ensure the last property, we choose the orientation for the controlling edges so that the edge points from $\{a,\vav\}$ to $\{\vav\}$, for all $a\in[\vav-1]$. 
\end{proof}

\section{Proof of Theorems~\ref{thm:exact-sequencesA} 
and~\ref{thm:exact-sequencesB}}
\label{sec:proofs}
Both theorems follow easily from our work in Sections~\ref{sec:exact}
and~\ref{sec:Rihanna}.

\begin{lem}
\label{lem:good-sigma}
For $\vav\geq 1$, there exists a function $\sigma$ such that
\begin{enumerate}
    \item[$(i)$] for all $A\subset[\vav]$ and $b\notin A$,
    \[ \sigma(A, A\cup b)=\pm 1, \]
    \item[$(ii)$] for all $0\leq a < b\leq \vav$, 
    \begin{align*} 
    \sigma(a, a\cup b) = \left\{ \begin{array}{ll} 
    \sigma(b, a\cup b) & 0=a,\\
    -\sigma(b, a\cup b) & 0<a,
    \end{array}\right. &&
    \sigma(a, a\cup b) = \left\{ \begin{array}{ll} 
    1 & 0=a,\\
    -1 & 0<a,
    \end{array}\right.
    \end{align*}
    \item[$(iii)$] for all $C\in\binom{[\vav]}{k-1}$ and distinct $a,b\notin C$,
    \[ \sigma(C, C\cup a)\sigma(C\cup a, C\cup\{a,b\}) + \sigma(C, C\cup b)\sigma(C \cup b, C\cup\{a,b\})=0. \]
\end{enumerate}
\end{lem}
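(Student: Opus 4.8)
The plan is to read $\sigma$ directly off a well-chosen orientation of $\mathcal{G}_\vav$, using the sign convention already fixed in Section~\ref{sec:Rihanna}: for $A\subseteq[\vav]$ and $b\notin A$, set $\sigma(A,A\cup b)=1$ when the edge between $A$ and $A\cup b$ is oriented ``down'' (that is, $A\cup b\to A$) and $\sigma(A,A\cup b)=-1$ when it is oriented ``up'' ($A\to A\cup b$). With this dictionary in place, the three conditions of the lemma become purely combinatorial statements about the orientation, and condition $(i)$ holds automatically since every edge receives exactly one of the two values.

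First I would invoke Lemma~\ref{lem:good-graph} to fix an orientation of $\mathcal{G}_\vav$ that is simultaneously oddly acyclic, positively swapped, and has a directed edge from $A$ to $\max(A)$ for every $A\in\binom{[\vav]}{2}$, and define $\sigma$ from this orientation as above. Condition $(ii)$ then splits into its two halves. The first half---the relation between $\sigma(a,a\cup b)$ and $\sigma(b,a\cup b)$---is exactly the positively swapped hypothesis: the induced subgraph on $\{a\},\{b\},\{a,b\}$ is a directed path of length two precisely when one of the two edges at $\{a,b\}$ points in and the other out, which under the dictionary reads $\sigma(a,a\cup b)=-\sigma(b,a\cup b)$, and it fails to be such a path precisely when the two signs agree. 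Since positively swapped makes this a path if and only if $0<a<b$, the first half of $(ii)$ follows. For the second half I would use the $\max$-edge property: the directed edge $\{a,b\}\to\{b\}$ forces $\sigma(b,a\cup b)=1$, and feeding this into the relation just established pins $\sigma(a,a\cup b)$ to $1$ when $a=0$ and $-1$ when $a>0$.

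The crux is condition $(iii)$, and here the key observation---which I would isolate as the main step---is that $\tau(C,a,b)=0$ is equivalent to the diamond $\mathcal{D}(C,a,b)$ being oddly acyclic. I would verify this by expanding $\tau(C,a,b)$ as $p_a+p_b$, where $p_a=\sigma(C,C\cup a)\sigma(C\cup a,C\cup\{a,b\})$ and $p_b=\sigma(C,C\cup b)\sigma(C\cup b,C\cup\{a,b\})$ are the sign-products along the two sides of the diamond from $C$ up to $C\cup\{a,b\}$. A short check shows $p_a=1$ exactly when that side is a coherently directed path between $C$ and $C\cup\{a,b\}$, and $p_a=-1$ exactly when its middle vertex is a local source or sink; hence $\tau=0$ if and only if exactly one side is a directed bottom-to-top path. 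Comparing the four sign patterns against the orientations of the underlying four-cycle confirms that this last condition holds if and only if the diamond is acyclic and admits a directed path of length three, i.e.\ is oddly acyclic. Granting this equivalence, the oddly acyclic hypothesis from Lemma~\ref{lem:good-graph} yields $\tau(C,a,b)=0$ for every $C\in\binom{[\vav]}{k-1}$ and all distinct $a,b\notin C$, which is exactly condition $(iii)$.

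I expect the only genuine work to be the diamond case analysis behind the equivalence between $\tau(C,a,b)=0$ and odd acyclicity of $\mathcal{D}(C,a,b)$; everything else is a direct translation of Lemma~\ref{lem:good-graph} through the sign dictionary. In particular no new combinatorial content is needed beyond what Section~\ref{sec:Rihanna} already supplies, so the proof should be short once that equivalence is recorded.
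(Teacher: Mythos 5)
Your proposal is correct and follows essentially the same route as the paper: both define $\sigma$ via the up/down sign dictionary on an orientation of $\mathcal{G}_{\vav}$ and then invoke Lemma~\ref{lem:good-graph}, with oddly acyclic giving $(iii)$, positively swapped giving the first half of $(ii)$, and the $\max$-edge property giving the second half. The only difference is that you spell out the diamond case analysis equating $\tau(C,a,b)=0$ with odd acyclicity, which the paper leaves implicit in its translation between orientations and signs; your verification of that equivalence is accurate.
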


\begin{proof}
Translating the directions on edges in an oriented graph $\mathcal{G}_{\vav}$
to values $\pm 1$---as described at the start of Section~\ref{sec:Rihanna}---each such graph 
encodes some function $\sigma$ with the correct domain and range in property (i).
By Lemma~\ref{lem:good-graph}, the orientation on $\mathcal{G}_{\vav}$ can be chosen so that properties (ii) and (iii) hold. 
\end{proof}

\begin{proof}[Proof of Theorems~\ref{thm:exact-sequencesA} and~\ref{thm:exact-sequencesB}]
By Lemmas~\ref{lem:well-defined},~\ref{lem:sigma-hom},~\ref{lem:exact},~\ref{lem:good-sigma}, 
and full nondegeneracy of $S$, the homomorphisms 
$\Upsilon^k$ form an exact sequence. 
Using Fact~\ref{fact:aut-der}, if $P\in\{D, G\}$, then the zero set $\Op{S}{P}$ is either $\Der(S)$ or $\Aut(S)$. 
In the case $P=G$, apply Lemma~\ref{lem:unit-embedding} to obtain the sequence in Theorem~\ref{thm:exact-sequencesB} from the zero-sets. 
\end{proof}

\section{Applications \& Examples}
\label{sec:app-ex}
The techniques developed in Sections~\ref{sec:seq} and~\ref{sec:exact} to prove 
Theorems~\ref{thm:exact-sequencesA} and~\ref{thm:exact-sequencesB} being rather new,
it might not be clear to the reader how they can be used to study tensors, nor 
how they might be applied in more general settings. We therefore devote this
final section to a range of examples and applications that illustrate the scope and 
potential of these new tools.


\subsection{Entangled quantum states}
\label{subsec:quantum}
In~\cite{DVC}, D\"ur-Vidal-Cirac considered
two maximally entangled quantum states on 3 qubits,
and demonstrated they are inequivalent.
Here, we provide an elementary verification of
their discovery using our exact sequences.
Let $\mathbb{H}$ be an 8-dimensional Hilbert space with 
$\langle \cdot | : \mathbb{H} \rightarrow \C^2 \oslash \C^2 \oslash \C^2$
(more commonly represented as $\mathbb{C}\oslash(\mathbb{C}^2\otimes \mathbb{C}^2\otimes\mathbb{C}^2)$, 
the dual space to $\mathbb{C}^{2}\otimes \mathbb{C}^2\otimes \mathbb{C}^2$).
In the usual convention, $\mathbb{C}^2=\mathbb{C}\bra{0}\oplus \mathbb{C}\bra{1}$,
so a basis for $\mathbb{H}$ is $\bra{abc}$ with $a,b,c\in\{0,1\}$.
The {\em Greenberger-Horne-Zeilinger} state is 
defined
\begin{align*}
	\bra{GHZ} = \frac{\sqrt{2}}{2}(\langle 000 | + \langle 111 |),
\end{align*}
while the state $W$ from \cite{DVC} is
\begin{align*}
	\bra{W} = \frac{\sqrt{3}}{3}(\bra{100} + \bra{010} + \bra{001}).
\end{align*}
An elementary calculation reveals that 
$\Cen(GHZ) \cong \C^2$, and the sequence is
\[ 0 \longrightarrow \C^2 \longrightarrow \C^2\oplus \C^2 \oplus \C^2 \longrightarrow \C^2\oplus\C^2 \longrightarrow 0. \]
On the other hand, $\Cen(W) \cong \mathbb{C}[x]/(x^2)$.
Even though the centroid is also $2$-dimensional,
the algebra is fundamentally different---in particular, 
it has a nontrivial Jacobson radical. 
Furthermore, the corresponding sequence for 
$W$ contains nontrivial outer derivations; 
with $A=\C[x] / (x^2)$, we have
\[ 0 \longrightarrow A \longrightarrow A\oplus A \oplus A \longrightarrow \C\oplus A \oplus A \longrightarrow \C \longrightarrow 0. \]
The two states are clearly inequivalent.

\subsection{Composition and matrix products}\label{sec:mat}
This example uses the familiar concept of \emph{tensor contraction}
(also known as \emph{hyper-matrix multiplication}), which for convenience we model
as a special case of composition in a module category. 

Recall in our notation $V\oslash U=\hom(U,V)$, which is again a
module.  Hence, composition of functions in the $K$-module category 
is a bilinear map
\begin{align*}
	\circ & :A\oslash B \times B\oslash C\bmto A\oslash C.
\end{align*}
If $A=K^a$, $B=K^b$, and $C=K^c$, we can, after fixing
bases, identify composition with the {\em matrix multiplication
tensor}
\begin{align*}
	\mathbb{M}_{a\times b}(K)\times \mathbb{M}_{b\times c}(K)\bmto 
		\mathbb{M}_{a\times c}(K).
\end{align*}
The latter has been studied extensively, and
its derivations and automorphisms are known. Thus, it is a good example
to illustrate our methods, as they provide another means to see
the structure.  

Within composition we have three self-evident 
contributions to the nuclei:
\begin{align*}
	\Nuc_{20}(\circ) & \cong \End(A), &
	\Nuc_{21}(\circ) & \cong \End(B), & 
	\Nuc_{10}(\circ) & \cong \End(C). 
\end{align*}
Therefore, while
$B$ does not occur in the codomain, its influence in the middle of the
domain can be identified by the nuclei.  This may not seem surprising
given how we introduced the product, but such a tensor
could be given as a black-box.  Then the product would take the form
$K^{ab}\times K^{bc}\bmto K^{ac}$, and that is a completely ambiguous
decomposition. Let us consider a specific example.

\begin{ex}
If $t \colon \M_{2\times 3}(\C) \times \M_{3\times 4}(\C) \bmto \M_{2\times 4}(\C)$, 
then 
\begin{align*}
   \Cen_3(t) &\cong \C, & \Nuc(t) &\cong \M_2(\C) \oplus \M_3(\C) \oplus \M_4(\C).
\end{align*} 
Therefore, the sequences in Theorem~\ref{thm:exact-sequencesA} and Theorem~\ref{thm:exact-sequencesB} have the form
\begin{align*} 
0 \longrightarrow \C \longrightarrow \M_2 \oplus \M_3 \oplus \M_4 \longrightarrow (\gl_2 \oplus \gl_3 \oplus \gl_4)/\C \longrightarrow 0, \\
1 \longrightarrow \C^\times \longrightarrow \GL_2 \times \GL_3 \times \GL_4 \longrightarrow \SL_3 \rtimes (\GL_2 \times \GL_4) \longrightarrow 1.
\end{align*}
In other words, all autotopisms of tensors given by matrix multiplication are ``inner,'' in the sense that they are realized as the 
groups of units of the various nuclei. 
\end{ex}

\subsection{Non-associative tensor decompositions}
Tensor contraction is expressible in other ways,
partly because it relates to our familiar associative matrix multiplication.
However, tensors can also be constructed in non-associative ways;
one need only consider products of Lie algebras to witness such cases.
Our next examples demonstrate how we detect nonassociative components of a
tensor.

\begin{ex}\label{ex:cubic-sl2}
Let $K$ be a field such that $2K=K$.
Let $\langle t| \colon K^4 \times K^4 \times K^4 \rightarrowtail \bigwedge^3 K^4$ be given by the exterior cube of $K^4$. 
Also, let $\langle s| : K\times \sl_2 \times \sl_2 \rightarrowtail \sl_2$ be the tensor that maps $(k, X, Y)$ to $k[X,Y]$. 
Define the tensor product (over $K$) of $K$-tensors $t$ and $s$ as 
$\langle t\otimes s |\colon (K^4\otimes K) \times 
(K^4\otimes \sl_2) \times (K^4\otimes \sl_2) \rightarrowtail K^4\otimes \sl_2$, where
\[ \langle t\otimes s | u\otimes v \rangle = \langle t | u \rangle \otimes \langle s | v\rangle. \]

A calculation shows that $\Der(t) \cong K^2\oplus \gl_4(K)$. 
For all other $A\subset [3]$, with $|A|\geq 2$, $\Der_A(t)\cong K^{|A|-1}$.
For $s$, on the other hand, $\Der_{\{0,1,2\}}(s) \cong K\oplus \gl_3(K)$ and $\Der(s) \cong K\oplus \Der_{\{0,1,2\}}(s)$. 
For all other $A\subset [3]$, $\Der_A(s)\cong K^{|A|-1}$. 
These derivation algebras are detected by the sequence in Theorem~\ref{thm:exact-sequencesA} for $t\otimes s$. In fact, there are no larger algebras:
\begin{align*}
0 \longrightarrow K \longrightarrow K^4 \longrightarrow K^6 \longrightarrow K\oplus \gl_3\oplus \gl_4 \longrightarrow \sl_3\oplus \sl_4 \longrightarrow 0.
\end{align*}
\end{ex}

\begin{ex}
Let $K$ be a field with degree 2 and 3 extensions denoted by $E$ and $F$ respectively. 
Let $t$ and $s$ be the dot products on $E^2$ and $F^2$, respectively. 
We concatenate a 1-dimensional coordinate (over $K$) to both $t$ and $s$, making them
$K$-trilinear: $\langle t' | :K\times K^4\times K^4 \rightarrowtail K^2$ and 
$\langle s' | :K^6\times K^6\times K\rightarrowtail K^3$.  
Let $r=t'\otimes s'$. 
Instead of writing out the sequence like in the previous examples, we display the dimension of 
every algebra over $K$ in Figure~\ref{fig:dim-seq}.
While $r$ is only $K$-trilinear (the centroid of $r$ is isomorphic to $K$), the local centroids 
detect the fact that $r$ was built from tensors that are \emph{bi}-linear over extensions, namely degree 2 and 3 extensions. 

\begin{figure}[h]
\centering
\begin{tikzpicture}
    \node(start) at (0,0) {0};
    \node(start0) at (-3.5, 0) {{\footnotesize 0}};
    \node(0123) at (0,0.75) {1};
    \node(labelC4) at (-3.5, 0.75) {{\footnotesize $\Cen_4(r)$}};
    \node(012) at (-1.5, 1.5) {2};
    \node(013) at (-0.5, 1.5) {1};
    \node(023) at (0.5, 1.5) {3};
    \node(123) at (1.5, 1.5) {1};
    \node(labelC3) at (-3.5, 1.5) {{\footnotesize $\Cen_3(r)$}};
    \node(01) at (-2, 2.25) {2};
    \node(02) at (-1.2, 2.25) {6};
    \node(03) at (-0.4, 2.25) {3};
    \node(12) at (0.4, 2.25) {8};
    \node(13) at (1.2, 2.25) {1};
    \node(23) at (2, 2.25) {12};
    \node(labelN) at (-3.5, 2.25) {{\footnotesize $\Nuc(r)$}};
    \node(D) at (0, 3) {26};
    \node(labelD) at (-3.5, 3) {{\footnotesize $\Der(r)$}};
    %
    \draw[-] (start) -- (0123);
    \draw[-] (0123) -- (012);
    \draw[-] (0123) -- (013);    
    \draw[-] (0123) -- (023);
    \draw[-] (0123) -- (123); 
    \draw[-] (012) -- (01); 
    \draw[-] (012) -- (02); 
    \draw[-] (012) -- (12); 
    \draw[-] (013) -- (01);
    \draw[-] (013) -- (03); 
    \draw[-] (013) -- (13);
    \draw[-] (023) -- (02);
    \draw[-] (023) -- (03);
    \draw[-] (023) -- (23);
    \draw[-] (123) -- (12);
    \draw[-] (123) -- (13);
    \draw[-] (123) -- (23);
    \draw[-] (01) -- (D);
    \draw[-] (02) -- (D);
    \draw[-] (03) -- (D);
    \draw[-] (12) -- (D);
    \draw[-] (13) -- (D);
    \draw[-] (23) -- (D);
    \draw[->] (start0) -- (labelC4);
    \draw[->] (labelC4) -- (labelC3);
    \draw[->] (labelC3) -- (labelN);
    \draw[->] (labelN) -- (labelD);
\end{tikzpicture}
\caption{A graphical description of the sequence in Theorem~\ref{thm:exact-sequencesA}. 
Here, we have separated the direct summands of the terms of the sequence, and we are only displaying their dimensions over $K$. 
The sequence starts at the bottom and goes to the top, with the last nontrivial term being the 26-dimensional derivation algebra.
The vertical sequence on the left aligns with the dimensions of the direct summands---in lex-least order.}
\label{fig:dim-seq}
\end{figure}
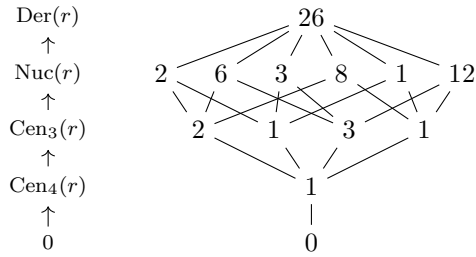
\end{ex}

\section*{Acknowledgments}
We thank the anonymous referee for 
helping us to clarify the relationship between our
sequences for tensors and those already in the 
literature for algebras.

\begin{bibdiv}
\begin{biblist}

\bib{AH}{article}{
   author={Abrams, Gene},
   author={Haefner, Jeremy},
   title={Picard groups and infinite matrix rings},
   journal={Trans. Amer. Math. Soc.},
   volume={350},
   date={1998},
   number={7},
   pages={2737--2752},
   issn={0002-9947},
   review={\MR{1422591}}
}

\bib{BFRS}{article}{
   author={Barmeier, Till},
   author={Fuchs, J\"{u}rgen},
   author={Runkel, Ingo},
   author={Schweigert, Christoph},
   title={On the Rosenberg-Zelinsky sequence in abelian monoidal categories},
   journal={J. Reine Angew. Math.},
   volume={642},
   date={2010},
   pages={1--36},
   issn={0075-4102},
   review={\MR{2658180}}
}

\bib{Benkart-Osborn}{article}{
   author={Benkart, G. M.},
   author={Osborn, J. M.},
   title={Derivations and automorphisms of nonassociative matrix algebras},
   journal={Trans. Amer. Math. Soc.},
   volume={263},
   date={1981},
   number={2},
   pages={411--430},
   issn={0002-9947},
   review={\MR{594417}}
}


\bib{DVC}{article}{
  author={D\"ur, W.}, 
  author={Vidal, G.}, 
  author={Cirac, J. I.},
  title = {Three qubits can be entangled in two inequivalent ways},
  journal = {Phys. Rev. A},
  volume = {62},
  issue = {6},
  pages = {062314},
  numpages = {12},
  year = {2000},
  month = {Nov},
  publisher = {American Physical Society}
}

\bib{FMW:densors}{unpublished}{
   author={First, Uriya},
   author={Maglione, Joshua},
   author={Wilson, James B.},
   title={A correspondence for tensors, polynomials, and operators},
   status={in preparation},
}

\bib{GM}{article}{
   author={Guralnick, Robert M.},
   author={Montgomery, Susan},
   title={On invertible bimodules and automorphisms of noncommutative rings},
   journal={Trans. Amer. Math. Soc.},
   volume={341},
   date={1994},
   number={2},
   pages={917--937},
   issn={0002-9947},
   review={\MR{1150014}}
}
 
\bib{LL:gen-der}{article}{
   author={Leger, George F.},
   author={Luks, Eugene M.},
   title={Generalized derivations of Lie algebras},
   journal={J. Algebra},
   volume={228},
   date={2000},
   number={1},
   pages={165--203},
   issn={0021-8693},
   review={\MR{1760961}},
}

\bib{RZ}{article}{
   author={Rosenberg, Alex},
   author={Zelinsky, Daniel},
   title={Automorphisms of separable algebras},
   journal={Pacific J. Math.},
   volume={11},
   date={1961},
   pages={1109--1117},
   issn={0030-8730},
   review={\MR{0148709}},
}

\bib{Wilson:Skolem-Noether}{article}{
   author={Wilson, James B.},
   title={On automorphisms of groups, rings, and algebras},
   journal={Comm. Algebra},
   volume={45},
   date={2017},
   number={4},
   pages={1452--1478},
   issn={0092-7872},
   review={\MR{3576669}},
}

\end{biblist}
\end{bibdiv}

\end{document}